\newtheorem{theorem}{Theorem}[section]
\newtheorem{prop}[theorem]{Proposition}
\newtheorem{lemma}[theorem]{Lemma}
\newtheorem*{conj}{Conjecture}
\newtheorem{subconj}{Conjecture}
{
\theoremstyle{remark}

}
\renewcommand{\mod}[1]{{\ifmmode\text{\rm\ (mod~$#1$)}\else\discretionary{}{}{\hbox{ }}\rm(mod~$#1$)\fi}}
\newcommand{\ep}{\varepsilon}
\newcommand{\C}{{\mathbb C}}
\renewcommand{\L}{{\mathcal L}}
\newcommand{\N}{{\mathbb N}}
\newcommand{\R}{{\mathbb R}}
\renewcommand{\S}{{\mathcal S}}
\newcommand{\Z}{{\mathbb Z}}
\begin{document}

\title[Nonzero values of Dirichlet $L$-functions in vertical arithmetic progressions]{Nonzero values of Dirichlet $L$-functions in vertical arithmetic progressions} 

\author[Greg Martin]{Greg Martin}
\address{Department of Mathematics \\ University of British Columbia \\ Room 121, 1984 Mathematics Road \\ Vancouver, BC, Canada \ V6T 1Z2}
\email{gerg@math.ubc.ca}
\author[Nathan Ng]{Nathan Ng}
\address{University of Lethbridge \\ Department of Mathematics and Computer Science \\ 4401 University Drive \\ Lethbridge, AB, Canada \ T1K 3M4 }
\email{nathan.ng@uleth.ca}

\subjclass[2010]{11M06, 11M26}

\begin{abstract}
Let $L(s,\chi)$ be a fixed Dirichlet $L$-function. Given a vertical arithmetic progression of $T$ points on the line $\Re s=\frac12$, we show that at least $cT/\log T$ of them are not zeros of $L(s,\chi)$ (for some positive constant~$c$). This result provides some theoretical evidence towards the conjecture that all ordinates of zeros of Dirichlet $L$-functions are linearly independent over the rationals. We also establish an upper bound (depending upon the progression) for the first member of the arithmetic progression that is not a zero of $L(s,\chi)$.
\end{abstract}

\maketitle

\section{Introduction}

Every Dirichlet $L$-function $L(s,\chi)$ has infinitely many nontrivial zeros, and our knowledge about their vertical distribution is rather good. Our understanding of the algebraic nature of their imaginary parts, on the other hand, is extremely meager. We have no reason to doubt that these imaginary parts are haphazard transcendental numbers that have no algebraic relationships among them; in particular, any vertical arithmetic progression should contain at most two zeros of $L(s,\chi)$. We are very far, however, from being able to prove such a statement. The purpose of this article is to improve our knowledge concerning the number of zeros of $L(s,\chi)$ in a vertical arithmetic progression.

Our main theorem provides a lower bound for the number of points in an arithmetic progression on the critical line at which a Dirichlet $L$-function takes nonzero values.

\begin{theorem} \label{main theorem}
Let $\chi$ be a primitive Dirichlet character modulo $q$ where $q \ge 2$, and let $a$ and $b$ be real numbers with $b\ne0$. Then  
\begin{equation}
     \#  \big\{ 1 \le k  \le T \colon L\big(\tfrac12+i (a+k b), \chi \big) \ne 0  \big\}   \gg  \frac{T}{\log T},
\end{equation}
where the implicit constant depends on $a$, $b$, and~$q$.
\end{theorem}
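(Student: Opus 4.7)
The plan is a moment-based Cauchy--Schwarz argument. Write $s_k = \tfrac12 + i(a+kb)$, let $N(T)$ denote the cardinality we wish to bound from below, and observe that the first-moment sum $\sum_{k=1}^T L(s_k,\chi)$ is supported on those $k$ with $L(s_k,\chi)\ne 0$, so Cauchy--Schwarz gives
\begin{equation*}
\Bigl|\sum_{k=1}^T L(s_k,\chi)\Bigr|^2 \;\le\; N(T)\sum_{k=1}^T |L(s_k,\chi)|^2.
\end{equation*}
It therefore suffices to prove that the first moment is $\gg_{a,b,q} T$ and that the second moment is $\ll_{a,b,q} T\log T$.

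For the first moment I would use an approximate functional equation (or a smoothed Perron-type formula) to replace each $L(s_k,\chi)$ by a Dirichlet polynomial of length roughly $T$, then interchange the sums over $k$ and $n$. The inner sum becomes a geometric series $\sum_{k=1}^T n^{-ikb}$, whose magnitude is $T$ when $n^{-ib}=1$ and $O(|1-n^{-ib}|^{-1})$ otherwise. The integers $n\ge 1$ satisfying $b\log n\in 2\pi\Z$ form either $\{1\}$ or a geometric progression $\{r^j\}_{j\ge 0}$ for some integer $r\ge 2$ determined by $b$ (since $\log n$ is forced into a one-dimensional lattice of the form $(\log r)\Z$). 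Summing $\chi(n)\,n^{-1/2-ia}$ over this resonant set produces a main term $C(a,b,\chi)\,T$: in the nontrivial case $r^{-ikb}=1$ for every $k$, so the geometric series evaluates to $C=(1-\chi(r)\,r^{-1/2-ia})^{-1}$, which is nonzero because $|\chi(r)\,r^{-1/2-ia}|\le r^{-1/2}<1$; in the trivial case $C=1$. The non-resonant $n$ and the dual Dirichlet polynomial coming from the functional equation contribute only a lower-order error after choosing the cutoff appropriately.

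For the second moment I would invoke a standard discrete mean-value estimate
\begin{equation*}
\sum_{k=1}^T \bigl|L(\tfrac12+i(a+kb),\chi)\bigr|^2 \;\ll_{a,b,q}\; T\log T,
\end{equation*}
which follows from a Montgomery--Vaughan or Gallagher hybrid large-sieve inequality applied to the Dirichlet-polynomial approximation of $L(\tfrac12+it,\chi)$, using the fact that the sampled ordinates are uniformly spaced with gap $|b|>0$. Combining the two bounds in the Cauchy--Schwarz display then yields $N(T)\gg T/\log T$.

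The main obstacle is the first moment, specifically identifying the full ``resonant'' set of $n$ contributing a main term of size $T$ and verifying that the constant $C(a,b,\chi)$ is genuinely nonzero for every admissible triple $(a,b,\chi)$; the structural claim about the resonant set and the explicit formula for $C$ in the geometric case are the crucial points. The second-moment estimate and the concluding Cauchy--Schwarz step are, by comparison, essentially routine.
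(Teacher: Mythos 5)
Your plan is the same Cauchy--Schwarz/moment framework as the paper, but with one material difference: you work with the bare first and second moments $\sum_k L(s_k,\chi)$ and $\sum_k |L(s_k,\chi)|^2$, whereas the paper mollifies, working with $\sum_k L(s_k,\chi)M(s_k)$ and $\sum_k |L(s_k,\chi)M(s_k)|^2$ for a mollifier $M(s)=\sum_{m\le X}\mu(m)\chi(m)m^{-s}(1-\log m/\log X)$. The paper's authors explicitly flag this choice: they believe the unmollified route can be made to work, but it is ``technically annoying,'' and they avoid it precisely because of the issue your proposal glosses over.

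That issue is the near-resonant part of the first moment, and it is a real gap in the write-up. After interchanging sums you correctly identify the exactly resonant $n$ (those with $n^{ib}=1$, forming $\{1\}$ or $\{r^j\}$) and compute $C=(1-\chi(r)r^{-1/2-ia})^{-1}$, which is indeed bounded away from $0$. But the ``non-resonant $n$ contribute a lower-order error'' step is where the work is. Bounding the geometric sum by $\min\{T,\|(b/2\pi)\log n\|^{-1}\}$, the non-resonant contribution is
\begin{equation*}
\sum_{\substack{2\le n\le N\\ n\ \mathrm{non\text{-}resonant}}} n^{-1/2}\min\Bigl\{T,\Bigl\|\tfrac{b}{2\pi}\log n\Bigr\|^{-1}\Bigr\},
\end{equation*}
and the bound one gets from the paper's Proposition~\ref{one fell swoop prop} is $\ll_b T + T^{1/2}\log T$ --- the same order as the main term, not smaller. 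The integers $n$ sitting within $O(n/T)$ of a point $e^{2\pi j/b}$ each contribute $\asymp Tn^{-1/2}$, and a priori the total from these could be a constant multiple of $T$ that overwhelms $|C|T$. To rescue the unmollified first moment one must argue that, for a \emph{fixed} $b$, each such near-resonant $n$ is a fixed integer whose contribution $n^{-1/2}\|(b/2\pi)\log n\|^{-1}$ eventually stops growing with $T$ and hence is $o(T)$ by a dominated-convergence argument over the geometrically-spaced blocks $[E_j,E_{j+1}]$. This is true, but it is exactly what needs to be proved, and it silently fails if one wants uniformity in $b$ (the paper's example with $b$ near $2\pi/\log 2$ illustrates this). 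The mollifier in the paper sidesteps all of this: with $a_n=\Lambda(n)/\log X$ for $n\le X$ and $|a_n|\le 2^{\omega(n)}$ beyond, the dangerous small-$n$ terms are damped by $1/\log X$, and the error lands at $O(T/\sqrt{\log T})$ cleanly.

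Your second-moment step, $\sum_k|L(\tfrac12+i(a+kb),\chi)|^2\ll_{a,b,q}T\log T$ via a discrete large-sieve/Montgomery--Vaughan bound on the approximating Dirichlet polynomial, is sound and is effectively what the paper does after invoking Gallagher's lemma to pass to the continuous integral (which is then evaluated by Bauer's theorem). One genuine drawback of dropping the mollifier: the bare second moment is provably $\asymp T\log T$, so there is no hope of sharpening to $\ll T$ and thus of a positive-proportion result via this route; the mollified $\sum_k|LM|^2$ at least has the ``right'' expected order $T$, and that is the obstruction the paper identifies for improving $T/\log T$ to $cT$. In summary: your strategy is viable and essentially parallel to the paper's, but you need to supply the missing analysis of the near-resonant $n$ in the first moment rather than assert it; the paper chose to mollify precisely to avoid writing out that analysis.
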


\noindent This result holds for all non-principal Dirichlet characters, as each character is induced by a primitive character.
As stated, the theorem does not cover the case of the Riemann zeta function; however, a minor modification of our methods would establish the analogous lower bound $\#  \big\{ 1 \le k  \le T \colon \zeta\big(\tfrac12+i (a+k b) \big) \ne 0  \big\}   \gg  {T}/{\log T}$.  

We remark that our method of proof would work perfectly well for other vertical lines $\Re(s) = \sigma$ inside the critical strip besides the critical line $\Re(s) = \frac12$ itself. On these other lines, however, a stronger result is already implied by zero-density estimates. Specifically, Linnik~\cite{Li} proved that for any $0<\delta<\frac12$, there exists $0<\ep(\delta)<1$ such that $L(s,\chi)$ has $\ll_{\chi,\delta} T^{1-\ep(\delta)}$ zeros in the rectangle $\{ \frac12+\delta \le \Re(s) \le 1$, $|\Im(s)|\le T\}$. By the functional equation for Dirichlet $L$-functions, the same quantity bounds the number of zeros in the rectangle in the rectangle $\{ 0 < \Re(s) \le \frac12-\delta$, $|\Im(s)|\le T\}$. In particular, $L(s,\chi)$ is nonvanishing at almost all the points in any vertical arithmetic progression $\sigma+i(a+kb)$ with $\sigma\ne\frac12$. (The lone exception is when $\chi$ is imprimitive, in which case $L(s,\chi)$ does have infinite arithmetic progressions of zeros on the imaginary axis; but these are easily understood.)

Putnam \cite{Pu1} proved that the set of zeros of $\zeta(s)$ does not contain any sequence of the form $\{ \tfrac12+i k b\colon k\in\N\}$. (In fact, this result immediately implies that such a sequence contains infinitely many points that are not zeros of $\zeta(s)$, via the simple trick of replacing $b$ by a multiple of itself.) Later, Lapidus and van Frankenhuysen~\cite[Chapter 9]{LvF} extended this result to a certain class of $L$-functions including Dirichlet $L$-functions. Moreover, they showed that there exist arbitrarily large values of $T$ such that 
\begin{equation*}
     \#  \big\{ 1 \le k  \le T \colon L\big(\tfrac12+i k b, \chi \big) \ne 0  \big\}  \gg T^{5/6}.
\end{equation*}
Theorem~\ref{main theorem} both improves this lower bound and generalizes it to inhomogeneous arithmetic progressions $\tfrac12+i (a+k b)$ (as Lapidus and van Frankenhuysen's result applies only to the case $a=0$).

We remark that stronger results are available if one is willing to assume the generalized Riemann hypothesis for Dirichlet $L$-functions. Under this hypothesis, Ford, Soundararajan, and Zaharescu~\cite[Theorem 6(ii)]{FSZ} establish for any $\ep>0$ the inequality
\begin{equation} \label{FSZ inequality}
\bigg| \sum_{\substack{0<\gamma\le T \\ \{\alpha\gamma\} \in \mathbb I}} 1 - |\mathbb I| N(T,\chi) - T \int_{\mathbb I} g_{\chi,\alpha}(x) \,dx \bigg| \le \alpha(1/2+\ep)T + o(T)
\end{equation}
involving the fractional-part notation $\{ x \} = x - \lfloor x \rfloor$; the sum is over all imaginary parts $\gamma$ of zeros of $L(s,\chi)$ such that the scaled fractional part $\{\alpha x\}$ lies in a given subinterval $\mathbb I$ of $\R/\Z$. (For our purposes the precise definitions of $N(T,\chi)$ and $g_{\chi,\alpha}(x)$ are irrelevant; all we need is that they and the error term $o(T)$ are bounded uniformly in the interval $\mathbb I$ for fixed $\chi$, $T$, and~$\alpha$.) We set $\alpha=1/b$, replace $T$ by $bT$, and let $\mathbb I$ be an interval centered at $a/b$ of vanishingly small diameter. In the limit as the length of $\mathbb I$ goes to zero, the second and third terms on the left-hand side of equation~\eqref{FSZ inequality} vanish, and we obtain
\[
\bigg| \sum_{\substack{0<\gamma\le bT \\ \{\gamma/b\} = \{a/b\}}} 1 \bigg| \le \frac1b(1/2+\ep)bT + o(T).
\]
On noting that the sum counts precisely those imaginary parts $\gamma$ such that $(\gamma-a)/b$ is an integer, we see that the left-hand side is precisely equal (since we are assuming GRH) to
\begin{multline*}
\#  \big\{ {-}\tfrac ab < k  \le T-\tfrac ab \colon L\big(\tfrac12+i (a+k b), \chi \big) = 0 \big\} \\
= \#  \big\{ 1 \le k  \le T \colon L\big(\tfrac12+i (a+k b), \chi \big) = 0 \big\} + O(1).
\end{multline*}
Since $\ep>0$ can be chosen arbitrarily, we conclude that at least $(1/2+o(1))T$ of the first $T$ points in the arithmetic progression $\tfrac12+i (a+k b)$ are not zeros of $L(s,\chi)$. If one is willing to make stronger assumptions (a version of the pair correlation conjecture, or a statement about the distribution of primes in short intervals, for example), then corresponding results of \cite{FSZ} show that all but $o(T)$ of the first $T$ points in the arithmetic progression $\tfrac12+i (a+k b)$ are not zeros of $L(s,\chi)$.

The technique used to prove Theorem~\ref{main theorem} can be adapted to bound the least term in an arithmetic progression on the critical line that is not a zero of $L(s,\chi)$.

\begin{theorem}  \label{lowest nonzero theorem}
Let $a$ and $b$ be real numbers satisfying $0\le a < b$, let $D > 8 \log 2$ be a real number, and let $\chi$ be a non-principal Dirichlet character\mod q.
\begin{enumerate}
\item If $b \ge 2 \pi$, then there exists a positive integer 
\[
k \ll_{D}  
     b^3 q \exp\Big( \frac{D \log(b^3 q)}{\log\log(b^3 q)} \Big)
\] 
for which $L(\frac12+i (a+k b),\chi) \ne 0$.
\item If $0 < b \le 2 \pi$, then there exists a positive integer 
\begin{equation*}
k \ll_{D}   b^{-1} q \exp \Big( \frac{D \log q}{\log\log q} \Big)
\end{equation*}
for which $L(\frac12+i (a+k b),\chi) \ne 0$.
\end{enumerate}
\end{theorem}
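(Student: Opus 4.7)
Our strategy is to compute the first-moment sum
\[
S_K = \sum_{k=1}^K L(\tfrac12+i(a+kb),\chi)
\]
(perhaps with a smooth weight) and show that $S_K$ is nonzero once $K$ reaches the threshold claimed in the theorem; since $S_K$ is a sum of $K$ terms, a nonzero sum immediately produces at least one index $k\le K$ with $L(\tfrac12+i(a+kb),\chi)\ne 0$. To evaluate $S_K$, I would invoke a smoothed approximate functional equation
\[
L(\tfrac12+it,\chi) = \sum_{n\ge 1}\frac{\chi(n)}{n^{1/2+it}}V_1(n/X) + \chi^{*}(\tfrac12+it)\sum_{n\ge 1}\frac{\bar\chi(n)}{n^{1/2-it}}V_2(n/Y) + (\text{tiny error})
\]
with $XY\asymp q|t|$, smooth cutoffs $V_1,V_2$, and $\chi^{*}$ denoting the gamma-factor ratio from the functional equation of $L(\cdot,\chi)$. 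Interchanging the two summations decomposes $S_K$ into a diagonal contribution from $n=1$ (which equals essentially $K$), an off-diagonal main-sum contribution from $n\ge 2$, and an oscillatory dual contribution $D_K$ coming from the second Dirichlet polynomial.

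For the off-diagonal, the inner geometric sum $F(n)=\sum_{k=1}^{K}n^{-ikb}$ has size at most $\min(K,|\sin(b\log n/2)|^{-1})$, so the dangerous terms are those $n$ with $\|b\log n/(2\pi)\|$ small, which are precisely integers lying close to $e^{2\pi m/b}$ for some positive integer $m$. Using the trivial Diophantine inequality that $n$ is an integer (so $|n-e^{2\pi m/b}|$ is bounded below by the distance of $e^{2\pi m/b}$ to $\Z$), together with geometric-series summation over $m$, should bound the total near-resonant contribution by $O(\sqrt{X}(1+1/b))$. For $D_K$, Stirling's formula gives $\chi^{*}(\tfrac12+iu)\sim c_\chi(qu/(2\pi e))^{-iu}$, producing an oscillatory exponential sum whose phase has second derivative in $k$ of order $b/K$ near its stationary point; a second-derivative van der Corput bound then yields $|D_K|\ll K^{3/4}q^{1/4}(b^{3/4}+b^{-1/4})$. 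This is $o(K)$ precisely when $K\gg qb^{3}$ (the regime $b\ge 2\pi$) and when $K\gg q/b$ (the regime $b\le 2\pi$), recovering the bare thresholds of the theorem.

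The slow factor $\exp(D\log(\cdot)/\log\log(\cdot))$ above these bare thresholds should emerge from a sharper treatment of the near-resonant integers and of the truncation errors in the functional equation, where divisor-function-type losses of size $d(n)\ll n^{O(1/\log\log n)}$ must be absorbed; the hypothesis $D>8\log 2$ is consistent with this, since $\log 2$ is the exponent governing the maximal order of~$d(n)$. The main obstacle I foresee is the uniform treatment of the dual sum $D_K$ across the two regimes, because its stationary point $k_{*}=(2\pi n/q-a)/b$ can lie inside or outside $[1,K]$ depending on the relative sizes of $n,q,b,K$, and endpoint contributions near $k=1$ and $k=K$ must then be handled separately. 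A secondary difficulty is the careful counting of near-resonant integers $n\approx e^{2\pi m/b}$ when $b\ge 2\pi$: in that regime several small integers can simultaneously be near-resonant, and their contributions to the main sum could otherwise overwhelm the $n=1$ diagonal.
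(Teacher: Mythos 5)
Your proposal has a genuine gap in the treatment of the near-resonant off-diagonal terms, and it is exactly the gap that the paper's mollifier is designed to close. You claim that the near-resonant contribution from $n\ge 2$ can be bounded by $O(\sqrt X(1+1/b))$ uniformly, by appealing to a Diophantine lower bound on $|n-e^{2\pi m/b}|$. But $b$ is an arbitrary positive real, so there is no such lower bound: for suitable $b$ the quantity $b\log n$ can be an exact multiple of $2\pi$ for a small integer $n$. Concretely, take $a=0$ and $b=2\pi/\log 2$; then the geometric sum at $n=2$ is $\sum_{k=1}^K 2^{-ikb}=K$, and the $n=2$ term contributes $\chi(2)K/\sqrt 2$, which is of the same order as your $n=1$ diagonal. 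The unmollified first moment $\sum_{k\le K}L(\tfrac12+i(a+kb),\chi)$ is therefore not $\sim K$ uniformly in $b$ (the paper makes precisely this point in Section~\ref{strategy}, computing $\sum_k L(s_k,\chi)\sim(1-\chi(2)/\sqrt2)^{-1}T$ in that example). The mollifier $M(s,\chi,P_2)=\sum_{m\le X}\mu(m)\chi(m)m^{-s}$ is not a cosmetic convenience here: after multiplying and expanding, the product coefficients $b_n=\sum_{\ell m=n,\,\ell\le qU,\,m\le X}\mu(m)$ vanish for $1<n\le X$, so all the dangerous small-$n$ resonances are annihilated outright, and only $n>X$ (where $n^{-1/2}$ is already small) contribute.

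Two further differences from the paper's route are worth noting. First, the paper does not invoke the symmetric approximate functional equation with a dual Dirichlet polynomial and gamma-factor oscillation; it uses a one-sided truncated Dirichlet-series representation (Proposition~\ref{afedirichlet}), valid for $x>C|t|/2\pi$, with an error that decays like $q^{1/2-\sigma}x^{-\sigma}\log q$. Taking the truncation length $qU$ with $U\asymp\beta T$ makes the error $O((\beta T)^{-1/2}\log q)$ per term, small enough that no dual sum and no stationary-phase analysis are needed. Your proposed van der Corput treatment of $D_K$ is therefore extra machinery that the paper avoids entirely, and verifying the claimed bound $|D_K|\ll K^{3/4}q^{1/4}(b^{3/4}+b^{-1/4})$ uniformly (including endpoint contributions when the stationary point falls near $k=1$ or $k=K$) would be substantial additional work. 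Second, you do not address how the case $0<b\le 2\pi$ is deduced; the paper obtains part (b) from part (a) by replacing the spacing $b$ with $c=\lceil 2\pi/b\rceil b\in[2\pi,4\pi]$ and observing that any nonvanishing point in the $c$-progression is also a point of the $b$-progression. Your diagnosis of where the $\exp(D\log(\cdot)/\log\log(\cdot))$ factor comes from is on target: it is the absorption of $2^{\omega(n)}\ll\exp((\log 2+o(1))\log n/\log\log n)$, which is why $D>8\log 2$ appears.
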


Again, a minor modification of our methods would establish, for $b\ge2\pi$, that there exists a positive integer $k \ll_{D} b^3 \exp\big( {(D \log b^3)}/{\log\log b^3} \big)$ for which $\zeta\big(\frac12+i(a+kb)\big) \ne0$. In the case $a=0$, van Frankenhuijsen~\cite{vF} established that $\zeta(\tfrac12+ikb)$ is nonzero for some positive integer $k\le 13b$, which is superior in this homogeneous case. (Note that when $0<b<2\pi$, the very first term $\frac12+i (a+ b)$ in the arithmetic progression is not a zero of $\zeta(s)$, since the lowest zero of the Riemann zeta-function has imaginary part exceeding~14.) In an unpublished manuscript, Watkins~\cite{Wa1} showed that $L(\frac12+i k b, \chi) \ne 0$ for some positive integer
\begin{equation}
  \label{eq:bound for k}
k \ll_\chi \exp\big( {(\tfrac{8}{3 \pi}+o(1)) b (\log b)^2} \big).
\end{equation}
Theorem~\ref {lowest nonzero theorem} results in a better dependence upon $b$; we also make explicit the dependence upon $q$ and allow inhomogeneous arithmetic progressions $\frac12+i (a+k b)$.

Motivation for considering the above theorems is provided by the following far-reaching conjecture:

\begin{conj}[{\bf ``LI''}] \label{LI}
 The nonnegative imaginary parts of the nontrivial zeros of Dirichlet $L$-functions corresponding
to primitive characters are linearly independent over the rationals. 
\end{conj}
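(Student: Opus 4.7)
The plan would be to suppose, for the sake of contradiction, that a nontrivial rational relation $\sum_{j=1}^n c_j \gamma_j = 0$ holds among positive imaginary parts $\gamma_j$ of nontrivial zeros $\tfrac12+i\gamma_j$ of primitive Dirichlet $L$-functions $L(s,\chi_j)$ (allowing repetition among the $\chi_j$), and then to translate this algebraic constraint into an analytic one that conflicts with the quasi-random behaviour of zeros as recorded by an explicit formula. First I would clear denominators so the $c_j$ are integers, observe that the hypothesis is equivalent to the multiplicative identity $\prod_j x^{i c_j \gamma_j} = 1$ for all $x>0$, and combine the explicit formulas for each $L(s,\chi_j)$ so as to produce an exact cancellation among oscillatory sums of von Mangoldt coefficients twisted by the $\chi_j$. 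The hope would be that this rigid identity contradicts a quantitative form of the prime number theorem in arithmetic progressions, or at the very least the expected square-root cancellation in sums of the shape $\sum_{|\gamma|\le T} x^{i\gamma}$.

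A natural second step would be to extract a Weyl-sum or Erd\H{o}s--Tur\'an discrepancy estimate. Kronecker's theorem says that LI is equivalent to equidistribution of $(x^{i\gamma_1},\ldots,x^{i\gamma_n})$ on the torus as $x$ varies, so a failure of LI would force a measurable bias in the joint distribution of zero ordinates near a large height $T$; one would then try to contradict this bias either with pair-correlation input or with a strengthened form of Theorem~\ref{main theorem}. Indeed, Theorem~\ref{main theorem} already shows that zeros cannot populate a full vertical arithmetic progression, and one would like to iterate or refine that argument to rule out thicker algebraic structures among zero ordinates. A useful intermediate goal, very much in the spirit of the present paper, would be to show that no three zeros on a single critical line are in arithmetic progression; even this extraordinarily weak consequence of LI appears to be open, which is an early warning sign.

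The main obstacle, which I do not see a route around with current technology, is that every step above converts a purely algebraic hypothesis (linear dependence over $\Q$) into an analytic inequality whose strength is at best comparable to what is already known unconditionally about zero distribution, and we possess no tool that couples analytic properties of $L(s,\chi)$ to transcendence statements about individual zero ordinates. LI contains as a very special case the assertion that a single ordinate $\gamma$ is irrational, and even this lies beyond Baker's theorem and every known technique in transcendence theory. For this reason I do not expect a full proof of LI to be accessible by the methods behind Theorems~\ref{main theorem} and~\ref{lowest nonzero theorem}; those results are best viewed as modest qualitative evidence, and any genuine attack on LI will almost certainly require either new transcendence machinery or a reduction of LI to a hypothesis about primes of depth comparable to the Riemann hypothesis itself.
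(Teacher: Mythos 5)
You have correctly recognized that the statement labelled ``LI'' is a \emph{conjecture}, not a theorem: the paper offers no proof of it, and none is expected. The paper's own stance is exactly yours --- Theorems~\ref{main theorem} and~\ref{lowest nonzero theorem} are presented as modest partial evidence for a weak consequence of LI (Conjecture~\ref{relevant conj}), and the surrounding discussion (including the remark that it is not even known that a single zero ordinate is irrational, cf.\ Conjecture~\ref{ridiculous}) matches your assessment that LI is far out of reach of current transcendence and analytic technology. Your sketch of how one might \emph{try} to attack it (explicit formulas, Kronecker/equidistribution, pair correlation) is a reasonable survey of the circle of ideas, and your identification of the fundamental obstruction --- that every analytic reformulation forgets the Diophantine content --- is exactly the standard reason the problem is considered hopeless at present. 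In short: there is nothing to prove here, you said so, and you said so for the right reasons.
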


It appears Hooley \cite{Ho} was the first to mention this conjecture in the context of Dirichlet $L$-functions. 
Recently there has been significant interest in this  conjecture, as
Rubinstein and Sarnak  \cite{RS} established theorems revealing a close connection between LI and ``prime number races'' in comparative prime number theory. 
Following their work, several researchers investigated the link between
LI and prime number races (see the survey of Granville and the first author \cite{GM} for references to the literature).

The LI conjecture is a natural generalization of a much older conjecture for the zeros of the Riemann zeta function:

\begin{subconj}  \label{LI zeta}
The positive imaginary parts of the nontrivial zeros of the Riemann zeta function
are linearly independent over the rationals. 
\end{subconj}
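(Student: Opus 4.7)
The plan is, frankly, to acknowledge that Subconjecture~\ref{LI zeta} is among the deepest open problems in analytic number theory, and then to sketch the only attack I can imagine having any chance of working rather than to claim a genuine proof. The natural approach is by contradiction: suppose one has distinct positive ordinates $\gamma_1,\dots,\gamma_n$ of nontrivial zeros $\rho_j=\tfrac12+i\gamma_j$ of $\zeta(s)$ and nonzero integers $c_1,\dots,c_n$ with $c_1\gamma_1+\cdots+c_n\gamma_n=0$, and try to derive a contradiction by feeding the hypothesized relation into the explicit formula connecting zeros and primes.

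The central tool would be the Guinand--Weil explicit formula, which, for a suitable even test function $h$, evaluates $\sum_\rho h(\gamma)$ as an archimedean term plus $-\sum_{n}\Lambda(n) n^{-1/2}\hat h((\log n)/2\pi)$ (and similar). Specialising to oscillatory $h$ of the form $h(t)=e^{iXt}$ for a real parameter $X$, the hypothesized relation $\sum_j c_j\gamma_j=0$ forces the trigonometric polynomial $P(X)=\sum_j c_j e^{iX\gamma_j}$ to satisfy $P'(0)=0$; iterating produces a family of vanishing moments that, on the prime side, must coincide with a weighted sum of the form $\sum_p c_p p^{-1/2}(\log p) p^{iX\gamma_j}$ evaluated along a lacunary sequence of heights. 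The hope would be to exhibit enough independent cancellation in these prime sums — via sieve estimates, zero-density inputs, or moment computations for $\zeta$ — to contradict the rigidity imposed by the relation. The partial evidence toward LI provided by Theorem~\ref{main theorem} fits into this framework: restricting the zeros to a vertical arithmetic progression is a very special type of linear relation, and even eliminating all but $O(T/\log T)$ such relations required the full strength of the methods of this paper.

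The hard part is essentially the entire problem. Current transcendence theory offers no mechanism to rule out even a single rational relation between two distinct ordinates of zeros of $\zeta$; the one-sided information that the explicit formula transports from primes to zeros (and back) is nowhere near sharp enough to detect algebraic constraints on the $\gamma_j$, because on the prime side one lacks control of exponential sums $\sum_p p^{-1/2}(\log p)p^{it}$ of the required quality, and on the zero side our estimates for $N(T)$ and for pair-correlation statistics leave an enormous margin of slack. In short, every approach I can outline collapses at exactly the point where one needs to forbid a specific additive relation among transcendental numbers about which we know essentially nothing individually. I would therefore expect Subconjecture~\ref{LI zeta} to remain open for the foreseeable future, with progress confined to incremental unconditional results in the spirit of Theorem~\ref{main theorem} and to sharper conditional theorems in the spirit of~\cite{FSZ}.
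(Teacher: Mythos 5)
This is a conjecture, not a theorem; the paper offers no proof of it, and none is known. You correctly recognize that Conjecture~\ref{LI zeta} is a wide-open problem and decline to claim a proof, instead sketching the only plausible line of attack (feeding a hypothetical linear relation into the explicit formula) and explaining precisely where it breaks down. That is the right response here, and your assessment of the obstruction—that we have no transcendence-theoretic tools to forbid even one additive relation among ordinates—matches the paper's own framing of LI as ``sorely lacking'' in direct theoretical evidence, with the results of the paper serving only as partial, indirect support.
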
 

Wintner~\cite{Wi2a, Wi1, Wi2} mentions the connection between the diophantine properties of the imaginary parts of
the zeros of the zeta function and the asymptotic distribution of the remainder term in the prime number theorem.
In \cite{Wi2} he indicates that Conjecture~\ref{LI zeta} reveals deeper information about this distribution function. Later this idea was explored further by Montgomery~\cite{Mo3} and then by Monach~\cite{Mon}. Soon after Wintner's articles, Ingham proved that Conjecture \ref{LI zeta} implies that the Mertens conjecture is false.

Unfortunately, theoretical results that provide direct evidence for LI or for Conjecture~\ref {LI zeta} are sorely lacking. Moreover, the only numerical study of such linear relations of which we are aware is Bateman {\it et al.}~\cite{Betal}, who refined Ingham's result by showing that it suffices to consider only integer linear relations $a_1\gamma_1 + \cdots + a_{n}\gamma_{n}=0$ where each $|a_j|\le 1$ except possibly for a single $|a_j|=2$. They verified numerically that no such relations exist among the smallest twenty ordinates $\gamma_j$ of zeros of $\zeta(s)$.

However, there are several interesting consequences of LI that are more amenable to analysis. For example, any set containing 0 is linearly dependent, and so LI implies:

\begin{subconj}  \label{central point}
$L(\frac12,\chi) \ne 0$ for all Dirichlet characters $\chi$.
\end{subconj}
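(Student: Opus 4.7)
The plan is to prove the subconjecture as a direct consequence of LI by contradiction. Suppose, to the contrary, that $L(\tfrac12,\chi)=0$ for some Dirichlet character $\chi$. I would first reduce to the case that $\chi$ is primitive: if $\chi$ is induced by the primitive character $\chi^*$ modulo $q^*$, then
\[
L(s,\chi) = L(s,\chi^*)\prod_{\substack{p\mid q \\ p\nmid q^*}}\bigl(1-\chi^*(p)p^{-s}\bigr),
\]
and the Euler factors on the right-hand side cannot vanish at $s=\tfrac12$ since $|\chi^*(p)p^{-1/2}|\le p^{-1/2}<1$ for every prime $p\ge 2$. Hence the assumption forces $L(\tfrac12,\chi^*)=0$ for the primitive character $\chi^*$.

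Next, I would observe that $s=\tfrac12$ lies in the critical strip $0<\Re(s)<1$ and is therefore a nontrivial zero of $L(s,\chi^*)$, with imaginary part equal to $0$. Consequently the number $0$ belongs to the set $S$ of nonnegative imaginary parts of nontrivial zeros of Dirichlet $L$-functions corresponding to primitive characters.

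Finally, I would invoke the trivial fact that any subset of $\R$ containing $0$ is linearly dependent over $\Q$: the one-element relation $1\cdot 0 = 0$ is a nontrivial rational linear combination of elements of $S$ that equals zero. This contradicts Conjecture~\ref{LI}, completing the proof.

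There is no substantive obstacle here; the entire content of the implication is the elementary reduction from arbitrary $\chi$ to the primitive inducing character $\chi^*$ (needed because LI is stated only for primitive characters) together with the observation that $0$ lies in any $\Q$-linearly independent set only vacuously. The depth of the statement lies entirely in the hypothesis LI itself, not in deriving Subconjecture~\ref{central point} from it.
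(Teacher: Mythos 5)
Your argument is correct and is essentially the paper's own (one-sentence) justification: the paper states only that ``any set containing $0$ is linearly dependent,'' which is exactly the final step you invoke. You have usefully spelled out the implicit reduction from arbitrary $\chi$ to the primitive inducing character $\chi^*$ (needed since LI is stated for primitive characters, and the Euler factors $1-\chi^*(p)p^{-1/2}$ are nonzero), but this is a routine unpacking of the same route rather than a genuinely different one.
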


There are several impressive partial results in the direction of Conjecture~\ref {central point}. Jutila~\cite{Ju} proved that there are infinitely many characters $\chi$ (quadratic, with prime conductor) for which $L(\frac12,\chi)\ne0$; later Balasubramanian and Murty~\cite{BM} showed that $L(\frac12,\chi)\ne0$ for at least $\frac1{25}$ of the Dirichlet characters $\chi$ with prime conductor. For general conductors $q$, Iwaniec and Sarnak~\cite{IS} proved that $L(\frac12,\chi)\ne0$ for at least $\frac13$ of the even primitive characters, while in the special case of quadratic characters, Soundararajan~\cite{So} showed that $L(\frac12,\chi)\ne0$ for at least $\frac78$ of the quadratic characters with conductor divisible by~8.

When discussing the zeros of Dirichlet $L$-functions, it is typical to speak of the ``set'' of zeros but actually mean the multiset of zeros, where each zero is listed according to its multiplicity. Since any multiset with a repeated element is linearly dependent, LI has the following implication as well:

\begin{subconj}  \label{simple}
Every zero of every Dirichlet $L$-function is simple.
\end{subconj}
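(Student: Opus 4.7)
The plan is to derive Conjecture~\ref{simple} from LI via the multiset interpretation emphasized in the paragraph preceding the statement: a repeated element in the multiset of nonnegative imaginary parts produces an immediate $\Q$-linear dependence. Supposing toward a contradiction that some $L(s,\chi^*)$ with $\chi^*$ primitive has a nontrivial zero $\rho=\beta+i\gamma$ of multiplicity $m\ge 2$, I would deduce that $\gamma$ appears with multiplicity at least $m$ in the multiset of imaginary parts of nontrivial zeros of $L(s,\chi^*)$, and hence that $|\gamma|$ appears with multiplicity at least two in the multiset of nonnegative imaginary parts to which LI applies.

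I would then split into two cases. If $\gamma=0$, then $0$ itself lies in the multiset, and the one-term relation $1\cdot 0=0$ is a nontrivial rational linear dependence contradicting LI; this is already the reasoning behind Conjecture~\ref{central point}. If $\gamma\ne 0$, label two of the occurrences of $|\gamma|$ as $\gamma_1$ and $\gamma_2$, so that $\gamma_1-\gamma_2=0$ is a nontrivial rational linear dependence, again contradicting LI.

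To extend the conclusion to imprimitive characters, I would use the factorization
\[
L(s,\chi)=L(s,\chi^*)\prod_{\substack{p\mid q\\ p\nmid q^*}}(1-\chi^*(p)p^{-s}),
\]
valid whenever $\chi$ is induced by the primitive character $\chi^*$ modulo $q^*$. Each factor $1-\chi^*(p)p^{-s}$ vanishes only on the line $\Re(s)=0$, so any zero $\rho$ of $L(s,\chi)$ with $0<\Re(\rho)<1$ is a zero of $L(s,\chi^*)$ of the same multiplicity, and the imprimitive case reduces to the primitive one just treated.

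No step in this plan is a genuine obstacle: the implication LI $\Rightarrow$ Conjecture~\ref{simple} is essentially tautological once the multiset framework of LI has been set up. The entire difficulty, if any, is encapsulated in LI itself; Conjecture~\ref{simple} is worth stating precisely because it remains unconditionally wide open despite following so easily from the main conjecture.
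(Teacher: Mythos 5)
Your plan reproduces exactly the paper's reasoning: a zero of multiplicity $m\ge 2$ yields a repeated element in (or the presence of $0$ in) the multiset of nonnegative ordinates, and a multiset with a repetition or containing $0$ is trivially $\Q$-linearly dependent, so LI forbids it; the paper compresses this into the single remark that ``any multiset with a repeated element is linearly dependent.'' Your added details — the case split on $\gamma=0$ versus $\gamma\ne 0$, the reduction of imprimitive characters to primitive ones via the Euler-factor formula (where the factor $1-\chi^*(p)p^{-s}$ vanishes only on $\Re(s)=0$), and, implicitly for $\gamma<0$, passing to the conjugate character $\bar\chi^*$ to land in the nonnegative multiset — are all correct and amount to spelling out the same argument.
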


There is ample numerical evidence for Conjecture~\ref{simple}: extensive calculations of zeros of $\zeta(s)$ (see~\cite{Od} and~\cite{vtw}) verify that all its known zeros are simple, and similarly, calculations by Rumely~\cite{Ru} of zeros of many Dirichlet $L$-functions confirm that all their known zeros are also simple. As for theoretical evidence, 
Levinson \cite{Le} proved that at least $\frac{1}{3}$ of the zeros of $\zeta(s)$ are simple, which was later improved by Conrey \cite{C} to $\frac25$. 
Moreover, in his seminal work on the pair correlation of zeros of $\zeta(s)$, Mongomery~\cite{Mo2} proved that if the Riemann Hypothesis is true then at least $\frac23$ of the zeros of $\zeta(s)$ are simple; furthermore, his pair correlation conjecture implies that almost all of its zeros are simple. Similar remarks apply to Dirichlet $L$-functions under the generalized pair correlation conjecture (see~\cite {otherRS}).  Unconditionally,
it seems that Bauer~\cite{Ba} was the first person to prove that a Dirichlet $L$-function has infinitely many simple zeros.
Recently, Conrey, Iwaniec, and Soundararajan have proven a technical theorem which roughly asserts that $\frac{14}{25}$ of the zeros of all Dirichlet $L$-functions are simple.  Assuming the Generalized Riemann Hypothesis,
Ozluk~\cite{Oz} proved a related theorem which also roughly asserts that $\frac{11}{12}$ of the zeros of all Dirichlet $L$-functions are simple.

(We remark in passing that under this ``multiset'' interpretation, LI actually implies the Generalized Riemann Hypothesis, since any violating zero $\beta+i\gamma$ would force another zero at $1-\beta+i\gamma$, thus causing a repetition of the imaginary part~$\gamma$. One could reformulate the LI conjecture to be independent of the truth or falsity of the Generalized Riemann Hypothesis if desired.)

We turn now to consequences of the LI conjecture to zeros in arithmetic progressions. Given a Dirichlet character $\chi$, let us define a {\em $\chi$-ordinate} to be a real number $\gamma$ such that $L(\sigma+i\gamma,\chi) = 0$ for some $0<\sigma<1$, and a {\em Dirichlet ordinate} to be a nonnegative real number $\gamma$ that is a $\chi$-ordinate for some Dirichlet character~$\chi$.

\begin{subconj}  \label{apconj}
Let $a$ and $b$ be positive real numbers.
\begin{enumerate}
\item For any Dirichlet character $\chi$, at most two points in the inhomogeneous arithmetic progression $\{ a+kb\colon k\in \Z\}$ are $\chi$-ordinates. When $a=0$, at most one point in the homogeneous arithmetic progression $\{ kb\colon k\in \Z\}$ is a $\chi$-ordinate.
\item No real number is a $\chi$-ordinate for two different Dirichlet characters $\chi$.
\end{enumerate}
\end{subconj}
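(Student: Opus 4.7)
The plan is to derive this conjecture as a direct consequence of LI; the core observation is that points in an arithmetic progression satisfy explicit nontrivial rational linear relations, so having several $\chi$-ordinates in a single AP would descend to forbidden rational relations among elements of the LI multiset. Two preliminary reductions are needed. First, if $\chi$ is induced by the primitive character $\chi^*$, then $L(s,\chi)$ and $L(s,\chi^*)$ differ by a product of Euler factors $1-\chi^*(p)p^{-s}$ whose zeros lie on the imaginary axis, outside the critical strip; hence the $\chi$-ordinates and $\chi^*$-ordinates coincide, and I may assume all characters are primitive throughout. Second, if $\gamma$ is a $\chi$-ordinate with $\chi$ primitive, then $|\gamma|$ is a nonnegative imaginary part of a zero of some primitive $L$-function: of $L(s,\chi)$ itself when $\gamma\ge 0$, and of $L(s,\bar\chi)$ when $\gamma<0$, via the symmetry $\overline{L(s,\chi)}=L(\bar s,\bar\chi)$. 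In either case $|\gamma|$ belongs to the multiset appearing in Conjecture~\ref{LI}.

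For part (a) with $a>0$, suppose $\gamma_j = a+k_j b$ for $j=1,2,3$ with distinct integers $k_j$ are all $\chi$-ordinates. A short calculation verifies the identity
\[
(k_3-k_2)\gamma_1 - (k_3-k_1)\gamma_2 + (k_2-k_1)\gamma_3 = 0,
\]
whose three coefficients are rational and nonzero. Substituting $\gamma_j = \pm|\gamma_j|$ as dictated by the signs converts this into a nontrivial rational linear relation among $|\gamma_1|,|\gamma_2|,|\gamma_3|$, all elements of the LI multiset, contradicting LI. If two of the $|\gamma_j|$ happen to coincide numerically, then the LI multiset itself contains a repeated element, which is an equal violation. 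The homogeneous case $a=0$ is analogous via the relation $k_2\cdot(k_1 b)-k_1\cdot(k_2 b)=0$ for any two putative $\chi$-ordinates $k_1 b$ and $k_2 b$; the edge case $k=0$ is handled because LI already forces $L(\tfrac12,\chi)\ne 0$ (as noted for Conjecture~\ref{central point}).

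For part (b), suppose $\gamma$ is simultaneously a $\chi_1$-ordinate and a $\chi_2$-ordinate, where $\chi_1$ and $\chi_2$ induce distinct primitive characters. After the sign passage of the setup, $|\gamma|$ appears in the LI multiset through each of these two distinct primitive characters, and so occurs at least twice overall; the resulting trivial relation $|\gamma|-|\gamma|=0$ contradicts LI. There is no analytic obstacle anywhere in this derivation: the statement is a clean corollary of LI. The only care required is in the bookkeeping of signs and imprimitive reductions together with the elementary observation that any repetition in the multiset is itself forbidden by LI.
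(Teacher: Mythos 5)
Your strategy is exactly the paper's: the paper itself notes that part (a) ``follows from the fact that any three distinct elements of an arithmetic progression are linearly dependent (or even two distinct elements, if the arithmetic progression contains 0),'' and that part (b) ``is a result of interpreting LI as discussing the multiset of Dirichlet ordinates.'' You rightly flesh this out with the explicit coefficient identity, the reduction to primitive characters, and the passage $\gamma \mapsto |\gamma|$ via $\overline{L(s,\chi)} = L(\bar s,\bar\chi)$ to land in the nonnegative multiset featured in LI. All of that matches the paper's intent.

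One claim, however, is incorrect as stated and creates a real gap. You assert that if two of the $|\gamma_j|$ coincide numerically, ``the LI multiset itself contains a repeated element.'' This fails in precisely the case that matters: if $\gamma_i = -\gamma_j$ with the underlying primitive character $\chi^*$ real, then $\gamma_i$ and $\gamma_j$ arise from the conjugate pair $\sigma \pm i|\gamma_i|$ of zeros of the single $L$-function $L(s,\chi^*)$, which contributes only one zero with nonnegative imaginary part and hence only one multiset entry, not a repeat. (Since LI implies GRH, $\sigma = \frac12$ and there genuinely is nothing to repeat.) The argument can be rescued, but not via a phantom repetition: after collapsing the coinciding pair one must check that the merged coefficient does not vanish. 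With $k_1<k_2<k_3$ the coefficients are $c_1 = k_3-k_2$, $c_2 = -(k_3-k_1)$, $c_3 = k_2-k_1$; a short check shows the merged coefficient $c_i\epsilon_i + c_j\epsilon_j$ can vanish only for the pair $\{1,3\}$, where it equals $2k_2 - k_1 - k_3$, and vanishing forces $\gamma_2 = \tfrac12(\gamma_1+\gamma_3) = 0$. A $\chi$-ordinate equal to $0$ puts $0$ into the LI multiset, which is already a rational dependence, so that subcase is dispatched; in every other configuration the collapsed relation is a nontrivial two-term rational dependence between the distinct multiset elements $|\gamma_i|$ and $|\gamma_\ell|$. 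The same repair is needed in your homogeneous argument for the pair $\{k_1 b, -k_1 b\}$ with a real character, where your ``repeated element'' step again breaks down.
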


Conjecture~\ref {apconj}(a) follows from the fact that any three distinct elements of an arithmetic progression are linearly dependent (or even two distinct elements, if the arithmetic progression contains 0). Conjecture~\ref {apconj}(b) is a result of interpreting LI as discussing the multiset of Dirichlet ordinates. One important special case of Conjecture~\ref {apconj}(a) comes from restricting to the central line:

\begin{subconj}  \label{relevant conj}
Let $a$ and $b$ be positive real numbers, and let $\chi$ be a Dirichlet character. At most two points in the vertical arithmetic progression $\{ \frac12+i(a+kb)\colon k\in \Z\}$ are zeros of $L(s,\chi)$.
\end{subconj}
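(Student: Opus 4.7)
The plan is to derive Conjecture~\ref{relevant conj} from the LI Conjecture as a special case of Conjecture~\ref{apconj}(a), along the lines the paper sketches just before the statement. Since any zero of $L(s,\chi)$ at the point $\tfrac12+i(a+kb)$ is in particular a $\chi$-ordinate, it suffices to derive Conjecture~\ref{apconj}(a) from LI.

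First I would reduce to the primitive case: if $\chi$ is induced by the primitive character $\chi^{*}$, then $L(s,\chi)$ and $L(s,\chi^{*})$ differ by finitely many Euler factors of the form $1-\chi^{*}(p)p^{-s}$, whose zeros all lie on the imaginary axis $\Re s=0$; hence the two $L$-functions share the same zeros in the open critical strip. Now suppose for contradiction that three distinct integers $k_1<k_2<k_3$ yield $\chi^{*}$-ordinates $\gamma_j := a+k_j b$. Direct expansion verifies the identity
\[
(k_3-k_2)\gamma_1 + (k_1-k_3)\gamma_2 + (k_2-k_1)\gamma_3 = 0,
\]
a nontrivial $\Q$-linear relation with three nonzero integer coefficients.

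To turn this into a contradiction with LI---which concerns the multiset of nonnegative imaginary parts of nontrivial zeros of all primitive Dirichlet $L$-functions---I would case-split on the signs of the $\gamma_j$. If some $\gamma_j=0$, then $L(\tfrac12,\chi^{*})=0$ puts $0$ into the LI multiset, an immediate linear dependence. If all $\gamma_j$ share the same sign, the identity above (negated if needed) is a nontrivial $\Q$-linear relation among three distinct nonnegative ordinates of $\chi^{*}$ (or of $\bar\chi^{*}$ via the functional equation $L(s,\chi^{*})=0\Leftrightarrow L(1-\bar s,\bar\chi^{*})=0$), directly contradicting LI. In the mixed-sign case the same identity becomes $\sum_j \pm c_j\,|\gamma_j|=0$ with nonzero rational $c_j$, among multiset entries drawn from both $\chi^{*}$ and $\bar\chi^{*}$.

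The main obstacle is the possible collision $|\gamma_i|=|\gamma_j|$ in the mixed-sign case (which forces $2a+(k_i+k_j)b=0$, a condition on $a$ and $b$). For complex $\chi^{*}$ the two coinciding ordinates sit in distinct components of the LI multiset---one from $\chi^{*}$, the other from $\bar\chi^{*}$---so their numerical equality is itself a linear dependence. For real $\chi^{*}$ those two ordinates are identified under the functional equation, but the three-term identity then collapses to a two-term relation between the surviving distinct ordinates, which one can check is still nontrivial unless the remaining $\gamma_j$ also vanishes---a case already handled. The analogous two-term version of this argument treats the homogeneous ($a=0$) subcase of Conjecture~\ref{apconj}(a), as foreshadowed by the paper's remark about two distinct elements of an AP containing~$0$.
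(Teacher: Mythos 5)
Your proposal is correct and follows the same route the paper compresses into one line: Conjecture~\ref{relevant conj} is the special case of Conjecture~\ref{apconj}(a) obtained by restricting to the critical line, which in turn follows from LI because any three distinct elements of an arithmetic progression are $\Q$-linearly dependent. You merely make explicit what the paper leaves implicit, namely the reduction to the primitive case, the coefficients $(k_3-k_2,\,k_1-k_3,\,k_2-k_1)$ of the dependence, and the bookkeeping (sign cases, functional equation, and the collision $\gamma_i=-\gamma_j$) needed to phrase the dependence in terms of the multiset of \emph{nonnegative} ordinates that LI actually addresses.
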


\noindent Theorem~\ref{main theorem} goes some distance towards proving Conjecture~\ref{relevant conj} and hence serves as actual theoretical evidence in favor of the LI conjecture.

We conclude this discussion with an observation of Silberman~\cite{Silb}, who singles out the following seemingly simple consequence of LI:

\begin{subconj}  \label{ridiculous}
There does not exist a real number $\omega$ such that every Dirichlet ordinate is a rational multiple of $\omega$.
\end{subconj}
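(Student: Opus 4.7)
The plan is a proof by contradiction: suppose there exists $\omega > 0$ for which every Dirichlet ordinate lies in $\Q\omega$, and fix any non-principal primitive Dirichlet character $\chi$ modulo $q \ge 2$, so that in particular every $\chi$-ordinate is a rational multiple of $\omega$.

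My first attempt would be to exhaust $\Q\omega$ by the cyclic subgroups $\tfrac{\omega}{N}\Z$ and apply Theorem~\ref{main theorem} to the arithmetic progression $\{ k\omega/N : k \in \N\}$ for each positive integer $N$. That theorem forces the number of $\chi$-ordinates in $\tfrac{\omega}{N}\Z$ up to height $T$ to be at most $(1 - c_N/\log T)\cdot TN/\omega$, i.e.\ linear in $T$. The Riemann–von Mangoldt formula, however, delivers $\sim T\log(qT)/(2\pi)$ $\chi$-ordinates in $(0,T]$, so for each fixed $N$ at least $\gg T\log T$ of those ordinates must have reduced denominator (as a rational multiple of $\omega$) exceeding $N$. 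This forces the reduced denominators of $\chi$-ordinates to be unbounded --- but no more.

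The main obstacle is that unbounded denominators are perfectly compatible with every ordinate being rational, whereas Conjecture~\ref{ridiculous} demands an \emph{irrational} ordinate. To convert the linear-versus-$T\log T$ mismatch into a genuine contradiction I would want to let $N = N(T) \to \infty$ with $T$ and union the Theorem~\ref{main theorem} bound across all reduced denominators up to $N$; this requires effective uniform control on how the implicit constant $c_N$ degrades as $b = \omega/N \to 0$. The mollified-second-moment input that presumably underlies Theorem~\ref{main theorem} does not appear to supply such uniformity, and Theorem~\ref{lowest nonzero theorem} --- which does give explicit $b$-dependence --- produces only one nonzero value rather than the positive-density set this argument needs. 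A potentially more fruitful alternative would be to play the ordinates of two distinct characters against one another via orthogonality at a common modulus, but any such argument seems to require a new quantitative input sensitive to correlations between the zero sets of different $L$-functions, and I expect this to be where the plan truly stalls.
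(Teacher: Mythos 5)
The statement you were asked to prove is not a theorem of the paper; it is Conjecture~\ref{ridiculous}, one of several consequences of the LI conjecture recorded to motivate the main results. The paper supplies no proof and immediately remarks that the conjecture is ``apparently intractable at the moment,'' noting in particular that it is not even known whether any Dirichlet $L$-function (including $\zeta$) has a single nontrivial zero with irrational imaginary part. So there is no proof in the paper against which to compare your attempt.

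Your diagnosis of why the obvious attack via Theorem~\ref{main theorem} fails is, however, correct and essentially matches the reason the problem is out of reach. Applying Theorem~\ref{main theorem} to the progression $\{k\omega/N : k \ge 1\}$ only rules out density one for each fixed $N$, and to defeat the $\sim T\log(qT)/2\pi$ count of ordinates from the Riemann--von Mangoldt formula one would need $N \to \infty$ with $T$, hence explicit control on how the implicit constant degrades as $b = \omega/N \to 0$. The paper's second-moment argument (Proposition~\ref{main thm second moment prop}) invokes Gallagher's lemma with spacing $\kappa = 2\pi\beta$, and the $\ll_\kappa$ constant blows up as $\beta \to 0$, so the method provides no such uniformity. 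Theorem~\ref{lowest nonzero theorem} does give explicit $b$-dependence, but, as you note, delivers a single nonzero value rather than a positive-density set. Your conclusion that the plan stalls is the correct one; the authors themselves offer no route forward.
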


\noindent However, not only is Conjecture~\ref{ridiculous} apparently intractable at the moment, it is unknown whether any Dirichlet $L$-function at all (including the Riemann $\zeta$-function) has even a single nontrivial zero whose imaginary part is irrational!

In Section~\ref{strategy} we describe the strategy of our proofs of Theorems~\ref{main theorem} and~\ref{lowest nonzero theorem}: in short, we multiply $L\big(\frac12+i(a+kb),\chi\big)$ by a mollifier and investigate the first and second moments of the resulting mollified sums. The main tools in these proofs are the approximate functional equation for $L(s,\chi)$, a suitable version of which is proved in Section~\ref{aux results section}, and a theorem of Bauer~\cite{Ba} (see Lemma~\ref{Bauer lemma}) concerning mollified second moments of $L(s,\chi)$ integrated on the critical line. The error terms that result from the approximate functional equation depend in a slightly delicate way on the distance between $(b \log n)/2\pi$ and the nearest integer; Section~\ref{technical stuff} contains a few technical lemmas needed to handle these terms. With these preliminaries accomplished, we prove Propositions~\ref {main thm first moment prop} and~\ref {main thm second moment prop}, from which Theorem~\ref{main theorem} directly follows, in Section~\ref{main proof}. (It seems likely that we could achieve the lower bound in Theorem~\ref{main theorem} without using a mollifier at all, but the use of the mollifier simplifies the argument; for example, it allows us to get an asymptotic formula for the first moment that is uniform in the spacing $b$ between points in the arithmetic progression.) The technically simpler Theorem~\ref{lowest nonzero theorem} is proved in Section~\ref{lowest nonzero proof}.

(Note added in proof: after this manuscript was submitted, we learned that Li and Radziwi\l\l~\cite{LR} have shown that at least $(\frac13+o(1))T$ points in a vertical arithmetic progression of length $T$ are not zeros of the Riemann $\zeta$-function, and they mention that their method extends to Dirichlet $L$-functions as well.)

\section{Moments and mollifiers}
\label{strategy}

In this section we describe our strategy for establishing Theorems~\ref{main theorem} and~\ref{lowest nonzero theorem}. As is not uncommon when investigating the nonvanishing of functions defined by Dirichlet series, both proofs depend upon sums of values of $L(s,\chi)$ weighted by a mollifier of the form
\begin{equation}
  \label{eq:mollifier}
   M(s,\chi,P) = \sum_{1 \le n \le X} \frac{\mu(n) \chi(n)}{n^s} P \Big(1-\frac{\log n}{\log X} \Big).
\end{equation}
In this definition, $X$ is a positive parameter, and $P$ can be any polynomial; the proofs of Theorems~\ref{main theorem} and~\ref{lowest nonzero theorem} employ the simple polynomials $P_1(x) = x$ and $P_2(x)=1$, respectively.

We introduce the notation
\[
   s_k = \tfrac{1}{2} + 2 \pi i (\alpha + k \beta)
\]
for integers $k$. (This notation differs from that of Theorems~\ref{main theorem} and~\ref{lowest nonzero theorem} by the change of variables $a= 2 \pi \alpha$ and $b= 2 \pi \beta$; this normalization simplifies a number of formulas that will be encountered later.) We define the corresponding first and second moments
\[
   S_1(T,P) = \sum_{k=1}^T L(s_k, \chi) M(s_k,\chi,P) 
\]
and 
\[
   S_2(T,P) = \sum_{k=1}^T |L(s_k, \chi) M(s_k,\chi,P)|^2,
\]
both functions of a positive parameter~$T$. A common application of the Cauchy--Schwarz inequality reveals that
\[
  \# \{ 1 \le k \le T \colon L(s_k,\chi) \ne 0 \} \ge 
   \frac{|S_1(T,P_1)|^2}{S_2(T,P_1)};
\]
therefore to establish Theorem~\ref{main theorem}, we seek a lower bound for the first moment $S_1(T,P_1)$ and an upper bound for the second moment $S_2(T,P_1)$. In fact we can establish that
\begin{equation}
   \label{eq:1st moment bound}
  S_1(T,P_1) \sim T
\end{equation}
(see Proposition~\ref {main thm first moment prop}) and
\begin{equation}
    \label{eq:2nd moment bound}
   S_2(T,P_1)  \ll T \log T
\end{equation}
(see Proposition~\ref {main thm second moment prop}), which combine to yield 
\[
    \# \{ 1 \le k \le T \colon L(s_k,\chi) \ne 0 \} \gg \frac{T}{\log T}
\]
and hence Theorem~\ref{main theorem}. In the proof, the length $X$ of the Dirichlet polynomial defining $M(s,\chi,P)$ is taken to be a power of $T$, namely $X = T^{\theta}$: the asymptotic formula~\eqref {eq:1st moment bound} holds for any $0< \theta <1$, while the estimate~\eqref {eq:2nd moment bound} holds for any $0< \theta <\frac12$. In summary, this discussion shows that we have reduced the proof of Theorem~\ref{main theorem} to verifying Propositions~\ref {main thm first moment prop} and~\ref {main thm second moment prop}.

The reader who is familiar with the use of mollifiers might be surprised to see that the method just described does not yield a positive proportion of nonzero values of $L(s,\chi)$. Indeed, we tried several alternate methods in the hopes of obtaining the presumably correct bound $S_2(T,P_1) \ll T$, but ultimately without success. The obstacle seems to be related to the fact that our second moment $S_2(T,P_1)$ is a discrete sum, while mollifiers tend to be most effective when used in continuous moments. In particular, we use Gallagher's lemma~\cite[Lemma 1.2]{Mo} to bound $S_2(T,P_1)$ above by a related integral, which we then evaluate asymptotically; therefore the loss of the factor of $\log T$ must occur in the use of Gallagher's lemma itself. While we were not able to realize such a plan, it is possible that an evaluation that directly addresses the discrete sum itself could show that $S_2(T,P_1) \ll T$; this stronger upper bound would improve Theorem~\ref{main theorem} to the statement that a positive proportion of the points in a vertical arithmetic progression are not zeros of $L(s,\chi)$.

This is not to say that the use of a mollifier has no benefits. For one thing, the asymptotic formula $S_1(T,P_1) \sim T$ is no longer necessarily true for the unmollified sum $\sum_{k=1}^T L(s_k,\chi)$: for example, if $\alpha=0$ and $\beta =\frac{2\pi}{\log 2}$, then one can show that
\[
\sum_{k=1}^T L( s_k,\chi ) = \sum_{k=1}^T L\bigg( \frac12 + \frac{2\pi k}{\log 2}i,\chi \bigg) \sim \bigg( 1-\frac{\chi(2)}{\sqrt 2} \bigg)^{-1} T.
\]
Even if $\beta$ were simply close to $\frac{2\pi}{\log 2}$ rather than exactly equal to it, the asymptotic formula $\sum_{k=1}^T L(s_k,\chi) \sim T$ would not be true uniformly in~$\beta$. In summary, the presence of the mollifier allows us to establish the asymptotic formula~\eqref{eq:1st moment bound} uniformly and hence with less technical annoyance. Furthermore, the mollified integral that arises after using Gallagher's lemma to bound $S_2(T,P_1)$ is one that has already been evaluated by Bauer~\cite{Ba}, which saves us trouble in the second moment as well.


The proof of Theorem~\ref{lowest nonzero theorem} is conceptually much simpler, since all we need to show is that
\[
   S_1(T,P_2) =  \sum_{k=1}^T L(s_k, \chi) M(s_k,\chi,P_2) \ne 0
\]
for $T$ sufficiently large as a function of $\beta$ and the conductor $q$ of~$\chi$. In fact, we show that $S_1(T,P_2) = T+o(T)$, where the error term is bounded by an explicit function of $T$, $\beta$, and~$q$. It then suffices to determine the least $T$ such that the error term is smaller than $T$. 
  
We briefly mention how our work relates to previous arguments. 
Lapidus and van Frankenhuysen~\cite[Chapter 9]{LvF} consider the expression
\[
 \sum_{k=-\infty}^{\infty} \frac{x^{s_k }}{s_k } L(s_k, \chi)
\] 
where  $s_k=\tfrac{1}{2}+2 \pi i k \beta$ and $x$ and is a real variable.
They developed an explicit formula for this expression which allowed them to deduce non-vanishing results for
$L(s_k, \chi)$.  Watkins's bound \eqref{eq:bound for k} is derived from an explicit formula for a variant of the above sum. 
We observe that all of the arguments rely on the calculation of certain discrete means of $L(s_k,\chi)$. 


\section{Truncated Dirichlet series}
\label{aux results section}


It is crucial to our method that $L(s,\chi)$ can be well-approximated inside the critical strip by a suitable truncation of its defining Dirichlet series. 
Although such a result is well-known for the Riemann zeta function, we were unable to find the corresponding result for Dirichlet $L$-functions in the literature. 
In fact, Corollary 1 of~\cite{Ra} states such a result; unfortunately,  no proof is provided and it does not follow from the main theorem in that article.
The following proposition provides a truncated Dirichlet series representation for $L(s,\chi)$ and is a $q$-analogue 
of \cite[Theorem 4.11]{T}.   Moreover, it improves the dependence on $q$ (particularly for imprimitive characters) and makes explicit the dependence on~$s$
of~\cite[Corollary 1]{Ra}.   In order to keep this article self-contained and to make available a reference for other researchers, we provide a proof. 
Throughout we use the traditional notation $s=\sigma+it$ to name the real and imaginary parts of~$s$.

\begin{prop}   
\label{afedirichlet}
Let $\chi$ be a non-principal Dirichlet character modulo $q$, and let $C > 1$ be a constant. Uniformly for all complex numbers $s$ with $\sigma>0$ and all real numbers $x >  C |t|/2 \pi$,
\begin{equation}
  \label{eq:truncatedDS}
L(s,\chi) = \sum_{n \le qx} \chi(n)n^{-s} + O_C\bigg( q^{1/2-\sigma} x^{-\sigma} (\log q) \bigg( 1 + \min\bigg\{ \frac\sigma{x\log q}, \frac{|t|}\sigma \bigg\} \bigg) \bigg).
\end{equation}
\end{prop}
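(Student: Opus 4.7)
The plan is to parallel Titchmarsh's proof of Theorem~4.11 for $\zeta(s)$, substituting character-sum inputs for the corresponding estimates. Let $A(u) = \sum_{n\le u}\chi(n)$; since $\chi$ is non-principal, $A$ is $q$-periodic (because $\sum_{n=1}^{q}\chi(n) = 0$) and satisfies $|A(u)|\ll\sqrt{q}\log q$ uniformly by the P\'olya--Vinogradov inequality. Extending the partial-summation identity
\[
    L(s,\chi) - \sum_{n\le qx}\chi(n)\,n^{-s} = -A(qx)(qx)^{-s} + s\int_{qx}^{\infty} A(u)\,u^{-s-1}\,du
\]
by analytic continuation from $\sigma>1$ to all $\sigma>0$ reduces the proof to bounding this tail in two different ways, corresponding to the two entries of the minimum in~\eqref{eq:truncatedDS}.

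Direct insertion of $|A(u)|\ll\sqrt{q}\log q$ into the displayed expression produces
\[
    \ll \sqrt{q}\,(\log q)(qx)^{-\sigma}\Big(1 + \frac{|s|}{\sigma}\Big) \ll q^{1/2-\sigma}x^{-\sigma}(\log q)\Big(1 + \frac{|t|}{\sigma}\Big),
\]
which yields the first half of the minimum since $|s|/\sigma \le 1 + |t|/\sigma$. For the sharper second half, I would decompose into Hurwitz zetas via $L(s,\chi) = q^{-s}\sum_{a=1}^{q-1}\chi(a)\,\zeta(s,a/q)$ and apply Euler--Maclaurin to each $\zeta(s,a/q)$ with cutoff $N = \lfloor x\rfloor$. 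The leading Euler--Maclaurin terms $(N+a/q)^{1-s}/(s-1)$ and $(N+a/q)^{-s}/2$ individually contribute $\ll q^{1-\sigma}x^{-\sigma}$ per summand (too large by $q^{1/2}$), but are rescued by two rounds of Taylor cancellation after being summed against $\chi(a)$: the constant-in-$a$ piece vanishes because $\sum_{a}\chi(a) = 0$, and the linear-in-$a$ piece is controlled by $|\sum_{a}\chi(a)\,a| \ll q^{3/2}\log q$ (itself a consequence of P\'olya--Vinogradov via partial summation). The higher-order Taylor corrections form a geometric series with ratio $|1-s|/N \le 2\pi/C + o(1)$ under the hypothesis $x > C|t|/(2\pi)$. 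Finally, the oscillatory Euler--Maclaurin remainder $s\int_N^{\infty}(\{u\}-\tfrac12)(u+a/q)^{-s-1}\,du$, after Fourier-expanding $\{u\}-\tfrac12$, is a sum of integrals with phase $2\pi ku - t\log(u+a/q)$ whose stationary points $u = t/(2\pi k) - a/q$ lie strictly below~$qx$ for every frequency~$k\ge 1$ precisely because $x > C|t|/(2\pi)$; integration by parts is therefore non-singular and yields the refinement factor $\sigma/(x\log q)$.

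I expect the principal technical difficulty to be the bookkeeping for the second bound: tracking the two rounds of Taylor cancellation with enough precision to obtain $q^{1/2-\sigma}$ rather than $q^{1-\sigma}$; sustaining uniformity as $\sigma\downarrow 0$ with no positive lower cutoff, so that intermediate factors of $1/\sigma$ do not appear spuriously; and absorbing the at most~$q$ ``spillover'' terms introduced by the mismatch between the Euler--Maclaurin cutoff $N = \lfloor x\rfloor$ and the stated cutoff at~$qx$ into the dominant error, which requires a separate direct estimate of these boundary terms.
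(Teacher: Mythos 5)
Your broad strategy mirrors the paper's: both halves of the minimum are obtained separately, the $|t|/\sigma$ half from Abel summation and P\'olya--Vinogradov on the tail $\sum_{n>qx}\chi(n)n^{-s}$, and the $\sigma/(x\log q)$ half from the Hurwitz-zeta decomposition $L(s,\chi)=q^{-s}\sum_a\chi(a)\zeta(s,a/q)$ together with a truncated Euler--Maclaurin formula and a Fourier expansion of the sawtooth. So the skeleton is right. However, the details of your second prong contain two problems, one of exposition and one genuine gap.

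First, your choice of an $a$-independent Euler--Maclaurin cutoff $N=\lfloor x\rfloor$ forces you to Taylor-expand the boundary terms $(N+a/q)^{1-s}/(s-1)$ and $(N+a/q)^{-s}/2$ in $a/q$. You assert the corrections form a geometric series with ratio $|1-s|/N\le 2\pi/C+o(1)$, but for $1<C\le2\pi$ that ratio is $\ge1$ and no geometric decay occurs. The series does still converge (the ratio of successive terms is $|1-s-n|/(N(n+1))$, which falls below $1$ after $O_C(1)$ steps, and each step inflates by at most $O_C(1)$), but a genuinely geometric bound is not available and the argument needs to be reformulated. The paper avoids this entirely: in Lemma~\ref{Hurwitz lemma} the partial sum is truncated at $x-\alpha$ rather than at $\lfloor x\rfloor$, so the boundary term is $-x^{1-s}/(1-s)$, identical for every $a$, and it vanishes outright from $\sum_a\chi(a)=0$ with no Taylor expansion at all. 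Adopting that cutoff would also make the main term come out as exactly $\sum_{n\le qx}\chi(n)n^{-s}$, eliminating your ``spillover'' terms.

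Second, and more seriously, the oscillatory remainder as you describe it does not produce the required $q^{1/2}$ saving. You Fourier-expand $\{u\}-\tfrac12$, which has no $a$-dependence, so all the $a$-dependence sits in $(u+a/q)^{-s-1}$. Integration by parts with the non-stationary phase then bounds each $a$-integral by $\ll |s|N^{-\sigma-1}/(|k|-1/C)$; summing over $a$ with no further cancellation gives a factor $q\cdot q^{-\sigma}$, i.e.\ $q^{1-\sigma}$, which is $q^{1/2}/\log q$ too large. The paper instead Fourier-expands the shifted sawtooth $\{u-a/q\}-\tfrac12$, so the $a$-dependence appears as a phase $e(-\nu a/q)$, and the $a$-sum collapses to the Gauss sum $\bar\chi(-\nu)\tau(\chi)$ with $|\tau(\chi)|=q^{1/2}$ (this is the content of Lemmas~\ref{L from Hurwitz lemma} and~\ref{bound integrals lemma}). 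Your version can be repaired by a second Taylor expansion of $(u+a/q)^{-s-1}$ in $a/(qu)$ together with the moment bounds $\sum_a\chi(a)a^n\ll q^{n+1/2}\log q$, but this reintroduces the same convergence issue as above and you would still need to verify the $a$-sum cancellation in every degree; as written, the proposal omits the mechanism by which the $q^{1/2}$ is recovered, which is precisely the non-routine part of the $q$-aspect. One upside of the moment-bound route, if carried out, is that it never invokes the Gauss sum identity and hence does not require $\chi$ to be primitive, whereas the paper must first prove the result for primitive $\chi$ and then reduce the general non-principal case to it.

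Finally, a small point: the stationary points of the phases lie below $x$ (or $\lfloor x\rfloor$), not below $qx$; since the integrals run over $[N,\infty)$ with $N\approx x$, that is the threshold that matters.
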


The dependence upon $C$ of the constant implicit in the $O_C$ notation is quite mild, something like $\min\{1,1/(C-1)\}$; we emphasize that the implicit constant does not depend upon the character~$\chi$. 

We begin with two lemmas that will be used in service of the important formula given in Lemma~\ref {L from Hurwitz lemma} below. We use the standard notation $\zeta(s,\alpha) = \sum_{n=0}^{\infty} (n+\alpha)^{-s}$ for the Hurwitz zeta function (valid for $\sigma>1$).

\begin{lemma}
\label{Hurwitz lemma}
Let $\alpha$, $x$, and $N$ be real numbers with $x>\alpha>0$ and $N>x$. Let $s$ be a complex number with $\sigma>0$. Then
\begin{equation*}
   \zeta(s,\alpha) = \sum_{0 \le n \le x -\alpha} (n+\alpha)^{-s} + \frac{ \{ x-\alpha \} - 1/2}{x^s} - \frac{x^{1-s}}{1-s} -s \int_{x}^{N} \frac{ \{ u-\alpha \} - 1/2}{u^{s+1}} \,du + O \bigg( \frac{|s|}\sigma N^{-\sigma}\bigg). 
\end{equation*}
\end{lemma}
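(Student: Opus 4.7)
The plan is to derive the identity via Euler--Maclaurin summation applied to $f(u) = (u+\alpha)^{-s}$, first in the region of absolute convergence $\sigma > 1$, and then to extend to the half-plane $\sigma > 0$ by analytic continuation. The point is that the lemma is essentially a quantitative statement of how $\zeta(s,\alpha)$ splits into an initial Dirichlet polynomial plus a meromorphic remainder governed by the standard Bernoulli periodic function $\{u\} - \tfrac12$.

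First I would fix $s$ with $\sigma > 1$, where $\zeta(s,\alpha) = \sum_{n=0}^\infty (n+\alpha)^{-s}$ converges absolutely, and split the sum at $n = x - \alpha$. The initial block produces $\sum_{0 \le n \le x - \alpha}(n+\alpha)^{-s}$ directly. For the tail $\sum_{n > x-\alpha}(n+\alpha)^{-s}$, I would apply the Euler--Maclaurin identity
\[
\sum_{a < n \le b} f(n) = \int_a^b f(u)\,du + \int_a^b (\{u\} - \tfrac12) f'(u)\,du + (\{a\} - \tfrac12) f(a) - (\{b\} - \tfrac12) f(b)
\]
with $a = x - \alpha$ and $b = M$, then substitute $v = u + \alpha$ (noting $\{u\} = \{v - \alpha\}$ and $f(x-\alpha) = x^{-s}$), and let $M \to \infty$. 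The boundary contribution at $M$ vanishes because $|f(M)| = (M+\alpha)^{-\sigma} \to 0$; the leading integral produces $\int_x^\infty v^{-s}\,dv = -x^{1-s}/(1-s)$; and the second integral becomes $-s\int_x^\infty (\{v-\alpha\} - \tfrac12) v^{-s-1}\,dv$. Collecting the pieces gives the exact identity
\[
\zeta(s,\alpha) = \sum_{0 \le n \le x - \alpha}(n+\alpha)^{-s} + \frac{\{x-\alpha\} - \tfrac12}{x^s} - \frac{x^{1-s}}{1-s} - s\int_x^\infty \frac{\{v-\alpha\} - \tfrac12}{v^{s+1}}\,dv,
\]
valid for $\sigma > 1$.

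Next I would split the tail integral as $\int_x^\infty = \int_x^N + \int_N^\infty$; the piece on $[x, N]$ is precisely the integral appearing in the lemma, and the remaining piece $-s\int_N^\infty (\{v-\alpha\} - \tfrac12) v^{-s-1}\,dv$ plays the role of the error. Using $|\{v-\alpha\} - \tfrac12| \le \tfrac12$ and $\int_N^\infty v^{-\sigma - 1}\,dv = N^{-\sigma}/\sigma$, this integral is bounded in absolute value by $|s| N^{-\sigma}/(2\sigma)$, which is the desired $O(|s| N^{-\sigma}/\sigma)$. Finally, both sides of the identity are meromorphic on $\sigma > 0$ with the only singularity being a simple pole at $s = 1$ coming from $-x^{1-s}/(1-s)$, which matches the known simple pole of $\zeta(s,\alpha)$; therefore the identity extends from $\sigma > 1$ to the whole half-plane $\sigma > 0$ by analytic continuation.

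The only delicate point is the bookkeeping in Euler--Maclaurin: getting the signs of the boundary terms $(\{a\} - \tfrac12)f(a)$ and $(\{b\} - \tfrac12)f(b)$ correct and confirming that the boundary term at infinity truly vanishes for $\sigma > 0$. Beyond that, the argument is entirely routine, and I do not foresee any serious obstacle since the lemma is, at its heart, simply Euler--Maclaurin applied to the shifted inverse-power function $(u+\alpha)^{-s}$ with a quantitative control of the remainder.
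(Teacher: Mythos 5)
Your proof is correct and takes essentially the same route as the paper: both apply the Euler--Maclaurin summation formula with $\phi(u)=(u+\alpha)^{-s}$ on $(x-\alpha,M]$, let $M\to\infty$ for $\sigma>1$ to obtain the exact identity with $\int_x^\infty$, split off $\int_N^\infty$ as the error term bounded by $\tfrac12|s|N^{-\sigma}/\sigma$, and extend to $\sigma>0$ by analytic continuation. The only cosmetic difference is that the paper expands $\sum_{0\le n\le M}$ directly while you first split off the head block and apply Euler--Maclaurin only to the tail, but the computations are identical.
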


\begin{proof}
We use the following simple summation-by-parts formula \cite[p.~13]{T}: if 
$\phi$ is a function with a continuous derivative on $[a,b]$, then
\begin{equation}
 \label{eq:summationformula}
  \sum_{a < n \le b} \phi(n) = \int_{a}^{b} \phi(u) \,du + \int_{a}^{b} (\{ u \} - \tfrac{1}{2}) \phi'(u) \,du + (\{ a \} - \tfrac{1}{2})\phi(a)- (\{b \} - \tfrac{1}{2}) \phi(b).
\end{equation}
Applying this formula with $\phi(n) = (n+\alpha)^{-s}$ on the interval $(x-\alpha,M]$ for a large integer $M$, we have
\begin{align*}
\sum_{0\le n\le M} (n+\alpha)^{-s} &= \sum_{0\le n\le x-\alpha} (n+\alpha)^{-s} + \int_{x-\alpha}^M (u+\alpha)^{-s} \,du \\
&\qquad{}- s \int_{x-\alpha}^M \frac{\{ u \} - 1/2}{(u+\alpha)^{s+1}} \,du + \frac{\{ x-\alpha \} - 1/2}{x^s} + \frac1{2(M+\alpha)^s} \\
&= \sum_{0\le n\le x-\alpha} (n+\alpha)^{-s} + \frac{(M+\alpha)^{1-s}}{1-s} - \frac{x^{1-s}}{1-s} \\
&\qquad{}- s \int_x^{M+\alpha} \frac{\{ u-\alpha \} - 1/2}{u^{s+1}} \,du + \frac{\{ x-\alpha \} - 1/2}{x^s} + \frac1{2(M+\alpha)^s}.
\end{align*}
Taking the limit of both sides as $M$ goes to infinity, which is valid for $\sigma>1$, we see that
\begin{equation}
\label{analytically continue}
\zeta(s,\alpha) = \sum_{0\le n\le x-\alpha} (n+\alpha)^{-s} - \frac{x^{1-s}}{1-s} + \frac{\{ x-\alpha \} - 1/2}{x^s} - s \int_x^\infty \frac{\{ u-\alpha \} - 1/2}{u^{s+1}} \,du.
\end{equation}
This establishes the lemma in the region $\sigma >1$. However, we observe that the last integral is bounded by  $O(N^{-\sigma}/\sigma)$; consequently, the right-hand side is a meromorphic function of $s$ for $\sigma>0$. As the left-hand side is 
also a meromorphic function of $s$, it follows that equation~\eqref{analytically continue} is valid in the half-plane $\sigma >0$ by analytic continuation.
\end{proof}
At this point we introduce two other standard pieces of notation: the complex exponential $e(y) = e^{2\pi i y}$ of period 1, and the Gauss sum $\tau(\chi) = \sum_{a=1}^{q} \chi(a) e(a/q)$.
The following lemma provides the main formula upon which rests our proof of Proposition~\ref {afedirichlet}.
\begin{lemma}
\label{L from Hurwitz lemma}
Let $\alpha$, $x$, and $N$ be real numbers satisfying $0<x<N$, let $s$ be a complex number with $\sigma>0$, and let $q>1$ be an integer. For any primitive character $\chi\mod q$,
\begin{multline}
  \label{Lschiexpression}
L(s,\chi) = \sum_{n \le qx} \chi(n)n^{-s} + (qx)^{-s} \sum_{a=1}^{q} \chi(a) \big( \{ x-\tfrac aq \} - \tfrac12 \big) \\
+ \frac{sq^{-s}}{2\pi i} \tau(\chi) \sum_{\nu\in\Z\setminus\{0\}} \frac{\bar\chi(-\nu)}\nu \int_x^N \frac{e(\nu u)}{u^{s+1}} \,du + O \bigg( \frac{|s|}\sigma N^{-\sigma} q^{1-\sigma} \bigg).
\end{multline}
\end{lemma}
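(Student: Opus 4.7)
My plan is to reduce to Lemma~\ref{Hurwitz lemma} by expanding $L(s,\chi)$ as a sum of Hurwitz zeta functions, and then to recognize the resulting integral piece via the Fourier series of the sawtooth function together with the Gauss-sum identity for primitive characters.

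First I would write
\[
L(s,\chi) = q^{-s}\sum_{a=1}^{q} \chi(a)\,\zeta(s, a/q),
\]
the standard decomposition obtained by collecting $\sum_n \chi(n) n^{-s}$ by residue class mod $q$; this identity persists for $\sigma>0$ by analytic continuation, since the would-be poles of the individual Hurwitz zetas at $s=1$ cancel in view of $\sum_{a}\chi(a)=0$ (as $\chi$ is non-principal). Next I apply Lemma~\ref{Hurwitz lemma} to each $\zeta(s,a/q)$ with the common parameters $x,N$, multiply by $\chi(a)q^{-s}$, and sum over $a=1,\dots,q$ (the term $a=q$ drops out since $\chi(q)=0$). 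Four contributions emerge: the main discrete sum becomes $\sum_{n\le qx}\chi(n)n^{-s}$ via the substitution $n=mq+a$; the $x^{-s}$ boundary term collects into $(qx)^{-s}\sum_{a}\chi(a)(\{x-a/q\}-\tfrac12)$; the polar term $-x^{1-s}/(1-s)$ disappears upon summing against $\chi$; and the residual errors $O(|s|\sigma^{-1}N^{-\sigma})$, summed with weights $|\chi(a)|q^{-\sigma}$, give the stated $O(q^{1-\sigma}|s|\sigma^{-1}N^{-\sigma})$.

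The remaining task is to identify the integral contribution
\[
-sq^{-s}\sum_{a=1}^{q}\chi(a)\int_x^N \frac{\{u-a/q\}-1/2}{u^{s+1}}\,du
\]
with the double sum in \eqref{Lschiexpression}. I would substitute the Fourier expansion
\[
\{t\}-\tfrac12=-\frac{1}{2\pi i}\sum_{\nu\ne 0}\frac{e(\nu t)}{\nu}\qquad(t\notin\Z),
\]
and invoke the identity $\sum_{a=1}^{q}\chi(a)\, e(ma/q)=\bar\chi(m)\tau(\chi)$, valid for every integer $m$ when $\chi$ is primitive. Combining these yields
\[
\sum_{a=1}^{q}\chi(a)\bigl(\{u-a/q\}-\tfrac12\bigr) = -\frac{\tau(\chi)}{2\pi i}\sum_{\nu\ne 0}\frac{\bar\chi(-\nu)\, e(\nu u)}{\nu},
\]
which, upon multiplication by $-sq^{-s}u^{-s-1}$ and integration from $x$ to $N$, produces exactly the claimed expression.

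The main obstacle is justifying the interchange of the $\nu$-sum with the $u$-integral, since the Fourier series above converges only conditionally. I would handle this by first truncating at $|\nu|\le V$, where the interchange is trivial, and then letting $V\to\infty$ after controlling the tail. A single integration by parts gives the bound
\[
\int_x^N \frac{e(\nu u)}{u^{s+1}}\,du \;\ll\; \frac{1+|s|/\sigma}{|\nu|\, x^{\sigma+1}},
\]
uniformly in $N$, so after inserting the $1/\nu$ weight the double series converges absolutely and the tail vanishes in the limit. With the interchange secured, the terms align with \eqref{Lschiexpression} and the proof is complete.
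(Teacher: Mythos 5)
Your proposal reproduces the paper's proof essentially step for step: the decomposition $L(s,\chi)=q^{-s}\sum_{a}\chi(a)\zeta(s,a/q)$, application of Lemma~\ref{Hurwitz lemma} to each Hurwitz zeta, recombination of the main term via $n=mq+a$, cancellation of the polar term using $\sum_a\chi(a)=0$, insertion of the sawtooth Fourier series, and collapse of the sum over $a$ via the primitive-character Gauss sum identity $\sum_a\chi(a)e(-\nu a/q)=\bar\chi(-\nu)\tau(\chi)$. The only cosmetic difference is the justification of the sum--integral interchange: the paper invokes bounded convergence (relying on the uniform boundedness of the sawtooth partial sums), while you truncate at $|\nu|\le V$ and control the tail with an integration-by-parts bound $\int_x^N e(\nu u)u^{-s-1}\,du\ll(1+|s|/\sigma)/(|\nu|x^{\sigma+1})$; both are valid, and yours is slightly more self-contained but is not a different approach.
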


\begin{proof}
The identity
$L(s,\chi) = q^{-s} \sum_{a=1}^{q} \chi(a) \zeta \big(s,\tfrac{a}{q} \big)$ is well known~\cite[page 71]{D}.
Using Lemma~\ref{Hurwitz lemma}, we proceed by writing
\begin{align}
L(s,\chi) 
&= q^{-s} \sum_{a=1}^{q} \chi(a) \sum_{0 \le n \le x -a/q} \big(n+\tfrac aq \big)^{-s} + (qx)^{-s} \sum_{a=1}^{q} \chi(a) \big( \{ x-\tfrac aq \} - \tfrac12 \big) \notag \\
&\qquad{}-q^{-s} \sum_{a=1}^{q} \chi(a) \frac{x^{1-s}}{1-s} - s q^{-s} \sum_{a=1}^{q} \chi(a) \int_{x}^{N} \frac{ \{ u-a/q \} - 1/2 }{u^{s+1}} \,du + O \bigg( \frac{|s|}\sigma N^{-\sigma} q^{1-\sigma} \bigg),
\label{middle term vanished}
\end{align}
where the term $-q^{-s} \sum_{a=1}^{q} \chi(a) \frac{x^{1-s}}{1-s}$ vanishes because $\chi$ is non-principal (here we use the assumption $q>1$). The first term on the right-hand side of equation~\eqref{middle term vanished} can be simplified using the change of variables $m=qn+a$:
\begin{equation}
  \label{eq:firstsimplification}
q^{-s} \sum_{a=1}^{q} \chi(a) \sum_{0 \le n \le x- {a/q}} \big(n+ \tfrac{a}{q} \big)^{\!{-}s} = \sum_{1\le m \le qx} \chi(m)m^{-s}.
\end{equation}

We now simplify the third term on the right-hand side of equation~\eqref{middle term vanished}.
The standard Fourier series
$\{ y \} - \frac{1}{2} = -\frac{1}{2 \pi i} \sum_{\nu\in\Z\setminus\{0\}} \frac{e(\nu y)}{\nu}$
is valid for all non-integer real numbers~$y$. 
By the bounded convergence theorem we may exchange the integral and sum, obtaining
\begin{equation*}
- \int_{x}^{N} \frac{ \{ u-\alpha \} - {1/2} }{u^{s+1}} \,du = \frac{1}{2 \pi i} \sum_{\nu\in\Z\setminus\{0\}} \frac{e(-\nu \alpha)}{\nu} \int_{x}^{N} \frac{e(\nu u)}{u^{s+1}} \,du.
\end{equation*}
We now set $\alpha=\frac aq$, multiply both sides by $sq^{-s}\chi(a)$, and sum over $a$ to obtain
\begin{equation}
\begin{split}
   \label{eq:secondsimplification}
{-}sq^{-s} \sum_{a=1}^q \chi(a) \int_{x}^{N} \frac{ \{ u-a/q \} - {1/2} }{u^{s+1}} \,du & 
 = \frac{sq^{-s}}{2 \pi i} \sum_{\nu\in\Z\setminus\{0\}} \bigg( \frac1{\nu} \int_{x}^{N} \frac{e(\nu u)}{u^{s+1}} \,du \bigg) \bar\chi(-\nu) \tau(\chi)
\end{split}
\end{equation}
since $ \sum_{a=1}^q \chi(a) e(-\nu a/q)=  \bar\chi(-\nu) \tau(\chi)$ for
 $\chi$ primitive and
any integer $\nu$~\cite[Corollary 9.8]{MV}.
Inserting the evaluations~\eqref{eq:firstsimplification} and~\eqref{eq:secondsimplification} into equation~\eqref{middle term vanished} establishes the lemma.
\end{proof}

We must establish two somewhat technical estimates before arriving at the proof of Proposition~\ref {afedirichlet}. For these estimates, we introduce the notation $S_\chi(u) = \sum_{a\le u} \chi(a)$, and we recall the P\'olya--Vinogradov inequality $S_\chi(u) \ll q^{1/2}\log q$ (see~\cite[Theorem 9.18]{MV}).

\begin{lemma}
\label{bound sum lemma}
For any non-principal character $\chi\mod q$, any positive real number $x$, and any complex number $s$,
\begin{equation*}
  (qx)^{-s} \sum_{a=1}^{q} \chi(a) 
   ( \{ x- \tfrac{a}{q} \} - \tfrac{1}{2}  )  \ll  q^{1/2-\sigma} x^{-\sigma} \log q.
\end{equation*}
\end{lemma}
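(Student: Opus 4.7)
The factor $(qx)^{-s}$ has modulus $(qx)^{-\sigma} = q^{-\sigma} x^{-\sigma}$, so the claim reduces to proving the estimate
\[
\sum_{a=1}^{q} \chi(a)\bigl(\{x-\tfrac{a}{q}\}-\tfrac12\bigr) \ll q^{1/2} \log q.
\]
The function $f(a) := \{x-a/q\}-\tfrac12$ varies slowly with $a$ (step size $1/q$, with one possible exceptional jump), so the natural tool is Abel summation against the character partial sums $S_\chi(a)$, for which we have the Pólya--Vinogradov estimate $S_\chi(a) \ll q^{1/2} \log q$ already cited in the paper.

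The plan is as follows. First I would write $\chi(a) = S_\chi(a) - S_\chi(a-1)$ and rearrange to get
\[
\sum_{a=1}^{q} \chi(a) f(a) = S_\chi(q) f(q) - \sum_{a=1}^{q-1} S_\chi(a)\bigl(f(a+1)-f(a)\bigr).
\]
The boundary term $S_\chi(q) f(q)$ vanishes because $\chi$ is non-principal (and here we use $q>1$). So it remains to bound the sum of differences.

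Next I would analyse the successive differences $f(a+1)-f(a)$. As $a$ ranges over $\{1,\ldots,q-1\}$, the quantity $x-a/q$ decreases monotonically by exactly $1/q$ at each step, covering a total range of length less than $1$. Hence $\{x-a/q\}$ decreases by $1/q$ at each step except at most one, where it can instead jump up by $1-1/q$ as the fractional part wraps around. Consequently $|f(a+1)-f(a)|=1/q$ for all but at most one value of $a$, and $|f(a+1)-f(a)| \le 1$ in the exceptional case. Applying the Pólya--Vinogradov bound uniformly in $a$ then gives
\[
\bigg| \sum_{a=1}^{q-1} S_\chi(a)\bigl(f(a+1)-f(a)\bigr) \bigg| \ll q^{1/2}\log q \cdot \Bigl( (q-2)\cdot \tfrac{1}{q} + 1 \Bigr) \ll q^{1/2} \log q,
\]
which combined with the $(qx)^{-\sigma}$ prefactor yields the stated bound.

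There is no real obstacle here; the only item requiring care is the bookkeeping of the single possible wrap-around in the fractional part as $a$ traverses $\{1,\ldots,q\}$, which contributes an $O(1)$ term in the summation-by-parts step. Everything else is a direct consequence of Pólya--Vinogradov and the non-principality of $\chi$.
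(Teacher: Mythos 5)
Your proof is correct, and it relies on the same essential input as the paper's — the Pólya--Vinogradov bound for $S_\chi(u)$, combined with non-principality of $\chi$ to kill the boundary term — but the decomposition is organized differently. The paper first uses periodicity in $x$ to reduce to $0\le x<1$, then computes $\{x-\tfrac aq\}$ explicitly via the case split $a\le \lfloor qx\rfloor$ versus $a>\lfloor qx\rfloor$, and simplifies the whole sum algebraically to $-\sum_{a\le \lfloor qx\rfloor}\chi(a)-\tfrac1q\sum_{a=1}^q a\chi(a)$; it then bounds the first term directly by Pólya--Vinogradov and the second by a further partial summation plus Pólya--Vinogradov. You instead sum by parts against $S_\chi$ from the outset and exploit that the weight $f(a)=\{x-\tfrac aq\}-\tfrac12$ has $O(1)$ total variation as $a$ runs over a full period, with the single fractional-part wrap-around contributing the lone large jump. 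Your route avoids the periodicity reduction and the explicit case analysis, at the cost of having to account for that one wrap-around; the paper's route gives an explicit closed form for the sum. Both arguments are of comparable length and give exactly the same bound, so this is a clean and valid alternative.
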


\begin{proof}
It suffices to show that the sum itself is  $O(\sqrt q \log q)$ uniformly in $x$. 
Since the sum is a periodic function of $x$ with period 1, we may assume that $0\le x<1$. Setting $b=\lfloor qx \rfloor$, we have
\begin{equation}
\begin{split}
 \label{eq:mainsum}
\sum_{a=1}^{q} \chi(a) \big( \{ x- \tfrac{a}{q} \}  - \tfrac12 \big) &= \sum_{a=1}^{b} \chi(a) \big( x- \tfrac{a}{q}  - \tfrac12 \big) + \sum_{a=b+1}^{q} \chi(a) \big( x- \tfrac{a}{q} +1  - \tfrac12 \big) \\
&= \sum_{a=b+1}^q \chi(a) - \tfrac1q \sum_{a=1}^q a\chi(a) + \big( x-\tfrac12 \big) \sum_{a=1}^q \chi(a) \\
&= - \sum_{a=1}^{b} \chi(a) - \tfrac1q \sum_{a=1}^q a\chi(a),
\end{split}
\end{equation}
since $\sum_{a=1}^q \chi(a) = 0$. By partial summation we obtain
\[
   \sum_{a=1}^q a\chi(a) = q S_{\chi}(q) -\int_{1}^{q} S_{\chi}(t)dt \ll q^{3/2} \log q
\]
by the P\'olya--Vinogradov inequality. We deduce that the sum in equation~\eqref{eq:mainsum}
is $O(\sqrt{q} \log q)$ as desired. 
\end{proof}

\begin{lemma}
\label{bound integrals lemma}
Let $\chi\mod q$ be a primitive character, and let $s$ be a complex number. Let $C>1$ be a constant, and let $N$ and $x$ be real numbers with $N>x>C|t|/2\pi$. Then
\begin{equation*}
\frac{sq^{-s}}{2\pi i} \tau(\chi) \sum_{\nu\in\Z\setminus\{0\}} \frac{\bar\chi(-\nu)}\nu \int_x^N \frac{e(\nu u)}{u^{s+1}} \,du\ll_C |s| q^{1/2-\sigma} x^{-\sigma-1}.
\end{equation*}
\end{lemma}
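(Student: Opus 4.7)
The plan is to reduce to the individual oscillatory integrals, exploit the hypothesis $x > C|t|/(2\pi)$ by combining the oscillations of $e(\nu u)$ and $u^{-it}$ into a single non-stationary phase, and then apply one integration by parts. Since $\chi$ is primitive we have $|\tau(\chi)| = \sqrt{q}$, so the absolute value of the prefactor equals $\tfrac{|s|}{2\pi}q^{1/2-\sigma}$; it therefore suffices to prove
\[
\sum_{\nu \in \Z\setminus\{0\}} \frac{\bar\chi(-\nu)}{\nu} \int_x^N \frac{e(\nu u)}{u^{s+1}}\,du \;\ll_C\; x^{-\sigma-1}.
\]

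For each fixed $\nu\neq 0$, I would write $u^{-s-1} = u^{-\sigma-1}e^{-it\log u}$ and set $\phi(u) = 2\pi\nu u - t\log u$, so that the integral becomes $\int_x^N u^{-\sigma-1} e^{i\phi(u)}\,du$, with $\phi'(u) = 2\pi\nu - t/u$ and $\phi''(u) = t/u^2$. The hypothesis $x > C|t|/(2\pi)$ gives $|t|/u \le 2\pi/C$ for all $u\ge x$, and consequently
\[
|\phi'(u)| \;\ge\; 2\pi|\nu| - 2\pi/C \;\ge\; 2\pi(1-1/C)|\nu|
\]
uniformly in $u\in[x,N]$ and $\nu\neq 0$. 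Integrating by parts against $e^{i\phi}/(i\phi')$ then produces a boundary contribution of size $O_C(1/(|\nu|x^{\sigma+1}))$, together with an error integral whose integrand splits (after differentiating the amplitude $1/(\phi'(u)u^{\sigma+1})$) into a piece $-\phi''(u)/((\phi'(u))^2 u^{\sigma+1})$ and a piece $-(\sigma+1)/(\phi'(u)u^{\sigma+2})$. The first piece is handled by substituting $|\phi''(u)| = |t|/u^2 \le (2\pi/C)/u$ to trade the factor $|t|$ for an extra power of $1/u$ and then integrating $u^{-\sigma-3}$; the second is handled by a direct integration of $u^{-\sigma-2}$. Both contribute $O_C(1/(|\nu|x^{\sigma+1}))$.

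Combining these bounds gives $\bigl|\int_x^N e(\nu u)u^{-s-1}\,du\bigr| \ll_C (|\nu|x^{\sigma+1})^{-1}$. Summing with the trivial estimate $|\bar\chi(-\nu)/\nu| \le 1/|\nu|$ yields $O_C\bigl(x^{-\sigma-1}\sum_{\nu\neq 0}\nu^{-2}\bigr) = O_C(x^{-\sigma-1})$, and reinstating the prefactor gives the claimed bound. The subtle step is the choice of the combined phase $\phi$: integrating by parts against $e(\nu u)$ alone would introduce a factor of $|s+1|$ into the bound that is not absorbed by the remaining estimates, whereas the hypothesis $x > C|t|/(2\pi)$ is exactly what is needed to guarantee that the full phase derivative $\phi'$ is bounded below, so that a single integration by parts suffices.
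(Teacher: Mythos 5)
Your proof is correct and follows essentially the same approach as the paper: both integrate by parts against the full phase $\phi(u)=2\pi\nu u-t\log u$ with amplitude $G(u)=1/(\phi'(u)u^{\sigma+1})=1/(u^{\sigma}(2\pi\nu u-t))$, and both use $x>C|t|/2\pi$ to bound $|\phi'|$ away from zero. The only (cosmetic) difference is in bounding the remainder integral: the paper observes that $|G(u)|$ is monotone decreasing on $[x,N]$, so $\int_x^N|G'|=|G(x)|-|G(N)|\le|G(x)|$, whereas you split $G'$ into its two explicit terms and estimate each one directly — both give the same bound.
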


\begin{proof}
The main step of the proof is to show that
\begin{equation}
\label{just the integral bound}
\int_x^N \frac{e(\nu u)}{u^{s+1}} \,du \ll \frac1{x^{\sigma+1}} \frac1{|\nu|-C^{-1}}
\end{equation}
for any nonzero integer~$\nu$.
Indeed, if we have the estimate~\eqref{just the integral bound} then
\begin{align*}
\frac{sq^{-s}}{2\pi i} \tau(\chi) \sum_{\nu\in\Z\setminus\{0\}} \frac{\bar\chi(-\nu)}\nu \int_x^N \frac{e(\nu u)}{u^{s+1}} \,du &\ll |s| q^{1/2-\sigma} \sum_ {\nu\in\Z\setminus\{0\}} \frac1{|\nu|} \cdot \frac1{x^{\sigma+1}} \frac1 {|\nu|-C^{-1}} \\
&= |s| q^{1/2-\sigma} x^{-\sigma-1} \sum_{\nu=1}^\infty \frac1{\nu(\nu-C^{-1})} \ll_C |s| q^{1/2-\sigma} x^{-\sigma-1}
\end{align*}
by the Gauss sum evaluation $|\tau(\chi)| = q^{1/2}$~\cite[Theorem 9.7]{MV}.

To establish the estimate~\eqref{just the integral bound}, we may assume without loss of generality that $t\ge0$, since conjugating the left-hand side is equivalent to replacing $t$ by $-t$ and $\nu$ by $-\nu$. We begin by writing
\begin{align*}
\int_x^N \frac{e(\nu u)}{u^{s+1}} \,du = \int_x^N \frac{e^{2\pi i\nu u - it\log u}}{u^{\sigma+1}} \,du = -i \int_x^N \big( 2\pi i\nu-\tfrac{it}u \big) e^{2\pi i\nu u - it\log u} G(u) \,du,
\end{align*}
where we have set $G(u) = 1/\big( u^{\sigma}(2\pi\nu u-t) \big)$. Integrating by parts,
\begin{align*}
\int_x^N \frac{e(\nu u)}{u^{s+1}} \,du &= -i e^{i(2\pi \nu u - t\log u)} G(u) \bigg|_x^N + i \int_x^N e^ {i(2\pi \nu u - t\log u)} G'(u) \,du \\
&\ll |G(N)| + |G(x)| + \int_x^N | G'(u) | \,du.
\end{align*}
It is easy to check that $G(u)$ is positive and decreasing for $u>t/2\pi$ when $\nu>0$, while $G(u)$ is negative and increasing for $u>0$ when $\nu<0$. Since $x>Ct/2\pi$ and $C>1$, we conclude that $|G(u)|$ is decreasing for $u\ge x$. Therefore 
\begin{align*}
\int_x^N \frac{e(\nu u)}{u^{s+1}} \,du &\ll |G(x)| + \int_x^\infty \big( {-}\mathop{\rm sgn}(\nu) G'(u) \big) \,du 
= 2|G(x)|  \\
&= \frac2{x^\sigma} \frac1{|2\pi\nu x-t|} 
< \frac2 {x^\sigma} \frac1{2\pi|\nu| x- 2\pi C^{-1}x} \ll \frac1 {x^{\sigma+1}} \frac1{|\nu|-C^{-1}}
\end{align*}
as claimed.
\end{proof}

\begin{proof}[Proof of Proposition~\ref{afedirichlet}]
We begin by assuming that $\chi$ is primitive and $q>1$, so that the statement of Lemma~\ref {L from Hurwitz lemma} is valid:
\begin{multline*}
L(s,\chi) = \sum_{n \le qx} \chi(n)n^{-s} + (qx)^{-s} \sum_{a=1}^{q} \chi(a) \big( \{ x-\tfrac aq \} - \tfrac12 \big) \\
+ \frac{sq^{-s}}{2\pi i} \tau(\chi) \sum_{\nu\in\Z\setminus\{0\}} \frac{\bar\chi(-\nu)}\nu \int_x^N \frac{e(\nu u)}{u^{s+1}} \,du + O \bigg( \frac{|s|}\sigma N^{-\sigma} q^{1-\sigma} \bigg).
\end{multline*}
Using Lemma~\ref{bound sum lemma} and Lemma~\ref{bound integrals lemma} to bound the second and third terms on the right-hand side yields
\[
L(s,\chi) = \sum_{n \le qx} \chi(n)n^{-s} + O\big( q^{1/2-\sigma} x^{-\sigma} \log q \big) + O_C\big( |s| q^{1/2-\sigma} x^{-\sigma-1} \big) + O \bigg( \frac{|s|}\sigma N^{-\sigma} q^{1-\sigma} \bigg).
\]
By letting $N \to \infty$ and observing that $|s|/x \ll (\sigma+|t|)/x \ll \sigma/x + 1$, we obtain
\begin{equation}
\label{eq:first form}
  L(s,\chi)  = \sum_{n \le qx} \chi(n) n^{-s} + O_C\bigg( q^{1/2-\sigma} x^{-\sigma} (\log q) \bigg( 1 + \frac\sigma{x\log q} \bigg) \bigg).
\end{equation}

In order to prove the stronger error term in equation~\eqref{eq:truncatedDS}, we shall establish a second bound for $L(s,\chi)- \sum_{n\le qx} \chi(n)n^{-s}=  \sum_{n>qx} \chi(n)n^{-s}$. 
For $\sigma > 1$, it follows from partial summation that
\begin{equation}
  \label{eq:Lschisum}
  \sum_{n>qx} \chi(n)n^{-s}
= -\frac{S_{\chi}(qx)}{(qx)^{s}}+s \int_{qx}^{\infty} \frac{S_{\chi}(u)}{u^{s+1}} \,du;
\end{equation}
by the  P\'olya--Vinogradov inequality,
\[
     \sum_{n>qx} \chi(n)n^{-s} 
  \ll \frac{q^{1/2}\log q}{(qx)^\sigma} + |s| \int_{xq}^\infty \frac{q^{1/2}\log q}{u^{\sigma+1}} \,du
  \ll q^{1/2-\sigma}x^{-\sigma}(\log q) \bigg( 1 + \frac{|s|}\sigma \bigg).
\]
Hence, the right-hand side of equation~\eqref{eq:Lschisum} may be analytically continued to $\sigma >0$. 
Since $|s|/\sigma \ll (\sigma+|t|)/\sigma = 1+|t|/\sigma$, we obtain 
\begin{equation}
\begin{split}
 \label{eq:second form}
 L(s,\chi) 
 & = \sum_{n\le qx} \chi(n)n^{-s} + O\bigg( q^{1/2-\sigma}x^{-\sigma}(\log q) \bigg( 1 + \frac{|t|}\sigma \bigg) \bigg). 
\end{split}
\end{equation}
The combination of the estimates~\eqref{eq:first form} and~\eqref{eq:second form} establishes the proposition when $\chi$ is primitive.


Now let $\chi\mod q$ be any non-principal character; we know that $\chi$ is induced by some character $\chi^*\mod{q^*}$, where $q^*\mid q$ and $q^*>1$. Define $r$ to be the product of the primes dividing $q$ but not~$q^*$. Then $\chi(n)$ equals $\chi^*(n)$ if $(n,r)=1$ and 0 if $(n,r)>1$. Consequently,
\[
\sum_{n\le qx} \chi(n)n^{-s} = \sum_{\substack{n\le qx \\ (n,r)=1}} \chi^*(n)n^{-s} = \sum_{n\le qx} \chi^*(n)n^{-s} \sum_{d\mid (n,r)} \mu(d)
\]
by the characteristic property of the M\"obius $\mu$-function. Reversing the order of summation,
\begin{equation}
\label{will apply twice}
\sum_{n\le qx} \chi(n)n^{-s} = \sum_{d\mid r} \mu(d) \sum_{\substack{n\le qx \\ d\mid n}} \chi^*(n)n^{-s} = \sum_{d\mid r} \frac{\mu(d)\chi^*(d)}{d^s} \sum_{m\le qx/d} \chi^*(m)m^{-s}.
\end{equation}
Since $\chi^*\mod{q^*}$ is primitive, we may use the already established equation~\eqref{eq:truncatedDS} on the right-hand side of equation~\eqref{will apply twice}, with $qx/q^*d$ in place of $x$; we obtain
\begin{align*}
\sum_{n\le qx} & \chi(n)n^{-s} \\
&= \sum_{d\mid r} \frac{\mu(d)\chi^*(d)}{d^s} \bigg( L(s,\chi^*) + O_C\bigg( (q^*)^{1/2-\sigma} \Big( \frac{qx}{q^*d} \Big)^{-\sigma} (\log q) \bigg( 1 + \min\bigg\{ \frac{q^*d\sigma}{qx\log q}, \frac{|t|}{\sigma} \bigg\} \bigg) \bigg) \bigg) \\
&= L(s,\chi^*) \sum_{d\mid r} \frac{\mu(d)\chi^*(d)}{d^s} + O_C\bigg( (q^*)^{1/2}q^{-\sigma} x^{-\sigma} (\log q) \bigg( 1 + \min\bigg\{ \frac{\sigma}{x\log q}, \frac{|t|}{\sigma} \bigg\} \bigg) \sum_{d\mid r} 1 \bigg) \\
&= L(s,\chi^*) \prod_{p\mid r} \bigg( 1 - \frac{\chi^*(p)}{p^s} \bigg) + O_C\bigg( 2^{\omega(r)} (q^*)^{1/2}q^{-\sigma} x^{-\sigma} (\log q) \bigg( 1 + \min\bigg\{ \frac{\sigma}{x\log q}, \frac{|t|}{\sigma} \bigg\} \bigg) \bigg).
\end{align*}
Note that $L(s,\chi^*) \prod_{p\mid r} \big( 1 - {\chi^*(p)/p^s} \big) = L(s,\chi)$; thus to establish equation~\eqref{eq:truncatedDS} for all non-principal characters, it suffices to show that $2^{\omega(r)} (q^*)^{1/2} \ll q^{1/2}$ in the error term. But this follows from the calculation
\[
2^{\omega(r)} \frac{ (q^*)^{1/2} }{q^{1/2} } \le \prod_{p\mid r} 2 \cdot \prod_{\substack{p^\alpha\parallel q \\ p\mid r}} (p^\alpha)^{-1/2} \le \prod_{p\mid r} (2p^{-1/2}) \le \frac2{\sqrt2}\frac2{\sqrt3},
\]
and so the proof of the proposition is complete.
\end{proof}

\section{Sums involving the distance to the nearest integer}
\label{technical stuff}

In our proofs of Theorems~\ref{main theorem} and~\ref {lowest nonzero theorem}, we need to estimate sums of the shape $\sum \min(T, \| \beta\log n \|^{-1})$ over certain sets of natural numbers. An important point is that the summand is very large when $n$ is close to one of the integers
\[
  E_j = \big\lfloor e^{j/\beta} \big\rfloor.
\]
For this reason, we begin by bounding the above sum when $n$ ranges from $E_j$ to $E_{j+1}$. We have made the effort to keep explicit all dependences of the implicit constants on $\beta$, particularly with an eye towards the application to Theorem~\ref {lowest nonzero theorem}.
 
\begin{lemma}
\label{one beta interval lemma}
Let $T \ge 1$ and $\beta > 0$ be real numbers. Uniformly for all integers $j\ge0$,
\[
\sum_{E_j\le n\le E_{j+1}} \min\{ T, \| \beta\log n \|^{-1} \} \ll T +  \frac{e^{1/\beta}}{\beta} \frac{j+1}{\beta} e^{j/\beta}.
\]
\end{lemma}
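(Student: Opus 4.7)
My plan is to exploit the fact that $E_j = \lfloor e^{j/\beta}\rfloor$ is essentially the integer where $\beta\log n$ crosses the value $j$: on $[E_j, E_{j+1}]$ the function $\beta\log n$ sweeps monotonically from just below $j$ to just below $j+1$, so $\|\beta\log n\|$ is small only in a short window near each endpoint, and well away from the endpoints one obtains a linearly growing lower bound on $\|\beta\log n\|$ that keeps the sum under control.

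First I would record the floor inequalities $\beta\log E_j \le j < \beta\log(E_j+1)$ and $\beta\log E_{j+1} \le j+1 < \beta\log(E_{j+1}+1)$, which follow immediately from $E_j \le e^{j/\beta} < E_j+1$. These let me split the sum at $N_j = \lfloor e^{(j+1/2)/\beta}\rfloor$ into a left portion $E_j \le n \le N_j$, on which $\|\beta\log n\| = \beta\log n - j$ (with the tiny exception of $n = E_j$, where instead $\|\beta\log E_j\| = j - \beta\log E_j \le \beta/E_j$), and a right portion $N_j < n \le E_{j+1}$, on which $\|\beta\log n\| = j+1-\beta\log n$.

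Next I would establish the key lower bound on the left portion by pivoting off the inequality $\beta\log(E_j+1) > j$ rather than off $\beta\log E_j$: for $n = E_j + k$ with $k \ge 2$, the elementary estimate $\log(1+y) \ge y/(1+y)$ gives
\[
\beta\log(E_j+k) - j \;>\; \beta\log\frac{E_j+k}{E_j+1} \;\ge\; \frac{\beta(k-1)}{E_j+k} \;\ge\; \frac{\beta(k-1)}{E_{j+1}}.
\]
Hence $\min\{T, \|\beta\log(E_j+k)\|^{-1}\} \le E_{j+1}/(\beta(k-1))$ for $k \ge 2$, while the two boundary terms $k = 0, 1$ contribute at most $T$ each. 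Summing over $k$ and using $\log N_j \le (j+1)/\beta$,
\[
\sum_{k=0}^{N_j - E_j} \min\{T, \|\beta\log(E_j+k)\|^{-1}\} \;\ll\; T + \frac{E_{j+1}\log N_j}{\beta} \;\ll\; T + \frac{(j+1)E_{j+1}}{\beta^2}.
\]
The right portion is treated symmetrically by writing $n = E_{j+1} - k$ and pivoting off the exact inequality $\beta\log E_{j+1} \le j+1$, producing the same bound.

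The main obstacle I expect is choosing the pivot point correctly in the Taylor-type estimate. A direct expansion of $\beta\log(E_j+k) - j$ about $e^{j/\beta}$ carries a stray term $\epsilon_j := \beta\log E_j - j$, which lies in $[-2\beta/E_j, 0]$ and need not vanish; this would force me to discard a bounded number of small $k$ by hand and to carry $\epsilon_j$ through the whole computation, complicating the bookkeeping. Pivoting around $E_j+1$ instead sidesteps this entirely, because $\beta\log(E_j+1) > j$ is an \emph{exact} inequality coming straight from the floor function, so the lower bound applies cleanly for every $k \ge 2$ with no error term to track. Once that is in place, the estimate $E_{j+1} \le e^{(j+1)/\beta} = e^{1/\beta}\cdot e^{j/\beta}$ absorbs the main term into the claimed form $T + (e^{1/\beta}/\beta)\cdot((j+1)/\beta)\cdot e^{j/\beta}$.
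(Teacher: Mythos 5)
Your proof is correct, and it shares the paper's skeleton: split $[E_j,E_{j+1}]$ at $H_j=\lfloor e^{(j+1/2)/\beta}\rfloor$, absorb the one or two boundary terms into $O(T)$, and show that $\|\beta\log n\|$ grows essentially linearly away from the endpoints so that the remaining sum is a harmonic series of length $\ll e^{(j+1)/\beta}$, hence $\ll E_{j+1}(\log N_j)/\beta \ll (e^{1/\beta}/\beta)\cdot((j+1)/\beta)\cdot e^{j/\beta}$. Where you diverge from the paper is in how the linear lower bound is extracted, and your route is genuinely cleaner. The paper pivots around the real numbers $e^{j/\beta}$ and $e^{(j+1)/\beta}$: on the left half it writes $\log(E_j+h)-j/\beta \ge \log\bigl(1+(h-1)/e^{j/\beta}\bigr)$ using $E_j\ge e^{j/\beta}-1$, and then invokes the concavity of $\log(1+x)$ over the entire interval $[0,e^{1/2\beta}-1]$ to linearize, which introduces the bookkeeping factor $(e^{1/2\beta}-1)$ that must afterwards be traded for $e^{1/\beta}/\beta$. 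You instead pivot around the integer $E_j+1$, exploiting the exact floor inequality $\beta\log(E_j+1)>j$, and then apply the pointwise elementary bound $\log(1+y)\ge y/(1+y)$; there is no stray error term $E_j-e^{j/\beta}$ to track, no interval on which concavity must be invoked, and the harmonic denominator $\beta(k-1)/E_{j+1}$ drops out immediately. Your right-half treatment via $E_{j+1}$ and $\beta\log E_{j+1}\le j+1$ mirrors the paper's convexity step (both reduce to $-\log(1-x)\ge x$) but again stays on integers. The result is the same bound, obtained with less case analysis; the paper's version is no stronger, just organized differently.
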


\begin{proof}
For convenience we will split the interval $[E_j,E_{j+1}]$ into two intervals at the point $H_j = \big\lfloor e^{(j+1/2)/\beta} \big\rfloor$. For the first half of the interval, note that
\begin{align*}
  \sum_{E_j\le n\le H_j} \min\{ T, \| \beta\log n \|^{-1} \} &\le 2T + \sum_{E_j+2\le n\le H_j} (\beta\log n - j)^{-1} \notag \\
  & \ll T + \tfrac1\beta \sum_{2\le h\le H_j-E_j} \big(\log (E_j + h) - \tfrac j\beta\big)^{-1},
\end{align*}
where we have set $h=n-E_j$. Since
\[
  \log(E_j + h) - \frac j\beta \ge \log\big( e^{j/\beta} - 1 + h \big) - \frac j\beta = \log \bigg( 1 + \frac{h-1}{e^{j/\beta}} \bigg),
\]
the last upper bound becomes
\begin{equation}
  \sum_{E_j\le n\le H_j} \min\{ T, \| \beta\log n \|^{-1} \} \ll T + \tfrac1\beta \sum_{2\le h\le H_j-E_j} \bigg( \log \bigg( 1 + \frac{h-1}{e^{j/\beta}} \bigg) \bigg)^{-1}.
\label{two terms and rest}
\end{equation}
The term $(h-1)/e^{j/\beta}$ can be bounded above by
\[
\frac{h-1}{e^{j/\beta}} \le \frac{H_j-E_j-1}{e^{j/\beta}} \le \frac{e^{(j+1/2)/\beta} - (e^{j/\beta}-1) -1}{e^{j/\beta}} = e^{1/2\beta} - 1;
\]
by the concavity of $\log(1+x)$ on the interval $0\le x\le e^{1/2\beta} - 1$, we infer that
\[
\log \bigg( 1 + \frac{h-1}{e^{j/\beta}} \bigg) \ge \frac{h-1}{e^{j/\beta}} \frac{\log(1+(e^{1/2\beta} - 1))}{e^{1/2\beta} - 1} = \frac{h-1}{2\beta e^{j/\beta} (e^{1/2\beta} - 1)}.
\]
Therefore equation~\eqref{two terms and rest} becomes
\begin{align}
  \sum_{E_j\le n\le H_j} \min\{ T, \| \beta\log n \|^{-1} \} &\ll T + \tfrac1\beta \sum_{2\le h\le H_j-E_j} \frac{\beta e^{j/\beta} (e^{1/2\beta} - 1)}{h} \notag \\
  &\le T + e^{j/\beta} (e^{1/2\beta} - 1) \log(H_j-E_j) \notag \\
  &\le T + e^{j/\beta} (e^{1/2\beta} - 1) \log e^{(j+1/2)/\beta} = T + (e^{1/2\beta} - 1) \frac{j+1/2}\beta e^{j/\beta}.
  \label{first half bound}
\end{align}

Similarly, consider
\begin{equation*}
  \sum_ {H_j\le n\le E_{j+1}} \min\{ T, \| \beta\log n \|^{-1} \} \le 2T + \sum_{H_j\le n\le E_{j+1}-2} (j+1-\beta\log n)^{-1}.
\end{equation*}
Note that for $2\le h\le E_{j+1}-H_j$, by the convexity of $\log(1-x)^{-1}$,
\begin{equation*}
  \frac{j+1}\beta - \log(E_{j+1} - h) \ge \frac{j+1}\beta - \log(e^{(j+1)/\beta} -h \big) = \log \bigg( 1 - \frac h{e^{(j+1)/\beta}} \bigg)^{-1} \ge \frac h{e^{(j+1)/\beta}}.
\end{equation*}
Therefore
\begin{align}
 \sum_ {H_j\le n\le E_{j+1}} \min\{ T, \| \beta\log n \|^{-1} \} &\ll T + \frac{e^{1/\beta}}{\beta} \sum_{2\le h\le E_{j+1}-H_j} \frac {e^ {j/\beta}} h \notag \\
 &\le T + \frac{e^{1/\beta}}{\beta} e^{j/\beta}  \log (E_{j+1} - H_j) \notag \\
 &\le T + \frac{e^{1/\beta}}{\beta} e^{j/\beta} \log e^{(j+1)/\beta} = T + \frac{e^{1/\beta}}{\beta} \frac{j+1}{\beta} e^{j/\beta}.
 \label{second half bound}
\end{align}
The lemma follows from adding the upper bounds~\eqref{first half bound} and~\eqref{second half bound} and noting that $e^{1/2\beta}-1 \le \tfrac1\beta e^{1/\beta}$ for $\beta >0$. 
\end{proof}

With the previous lemma in hand, we may now obtain a bound for
weighted averages of $ \min\{ T, \| \beta\log n \|^{-1} \}$
over arbitrary intervals.

\begin{prop}
For any real numbers $B > A\ge 1$,
\label{one fell swoop prop}
\begin{equation}
\label{beta gt 0}
\sum_{A< n\le B} n^{-1/2} \min\{ T, \| \beta\log n \|^{-1} \} \ll
 \frac{e^{1/\beta}}{e^{1/2\beta}-1}
  \bigg( T A^{-1/2} + \frac{e^{1/\beta}}{\beta} B^{1/2} (\log B +\beta^{-1}) \bigg)
\end{equation}
uniformly for $T \ge 1$ and $\beta > 0$. In particular, for $\beta \ge 1$,
\begin{equation}
\label{beta ge 1}
 \sum_{A\le n\le B} n^{-1/2} \min\{ T, \| \beta\log n \|^{-1} \} 
 \ll \beta T A^{-1/2} + B^{1/2} \log B.
\end{equation}
\end{prop}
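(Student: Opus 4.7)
The plan is to partition the sum according to the dyadic-type blocks $[E_j,E_{j+1}]$ (near whose endpoints $\|\beta\log n\|$ is small), apply Lemma~\ref{one beta interval lemma} on each block weighted by $n^{-1/2}$, and sum the resulting geometric-type series. To set up the cover I take $j_0$ to be the largest integer with $E_{j_0}<A$ (with $j_0=-1$ by convention if no such integer exists) and $j_1$ to be the largest integer with $E_{j_1}\le B$. Unpacking the definition $E_j=\lfloor e^{j/\beta}\rfloor$ yields $j_0\ge\beta\log A-1$ and $j_1\le\beta\log(B+1)$, and the union $\bigcup_{j_0\le j\le j_1}[E_j,E_{j+1}]$ covers $(A,B]$, so replacing $\sum_{A<n\le B}$ by $\sum_{j=j_0}^{j_1}\sum_{E_j\le n\le E_{j+1}}$ only enlarges the sum.

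For the per-block estimate, an elementary case split on whether $e^{j/\beta}\ge 2$ shows $E_j\ge\tfrac12 e^{j/\beta}$ for every $j\ge 0$, so $n^{-1/2}\le\sqrt{2}\,e^{-j/(2\beta)}$ uniformly on $[E_j,E_{j+1}]$. Multiplying this weight into Lemma~\ref{one beta interval lemma} gives
\[
\sum_{E_j\le n\le E_{j+1}} n^{-1/2}\min\{T,\|\beta\log n\|^{-1}\}
\ll e^{-j/(2\beta)}\,T+\frac{e^{1/\beta}}{\beta^{2}}\,(j+1)\,e^{j/(2\beta)}.
\]
Summing on $j\in[j_0,j_1]$ is now routine. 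The $T$-piece is a geometric series with ratio $e^{-1/(2\beta)}$ whose leading term is controlled through the lower bound $e^{-j_0/(2\beta)}\le e^{1/(2\beta)}A^{-1/2}$, yielding the first advertised term $\frac{e^{1/\beta}}{e^{1/(2\beta)}-1}\,TA^{-1/2}$. The second piece is an increasing geometric series multiplied by the polynomial factor $j+1$; bounding $j+1\le j_1+1$ inside the sum reduces it to a pure geometric series dominated by its value at $j=j_1$, after which the upper bound $j_1\le\beta\log(B+1)$ translates everything into the second advertised term $\frac{e^{1/\beta}}{e^{1/(2\beta)}-1}\cdot\frac{e^{1/\beta}}{\beta}\,B^{1/2}(\log B+1/\beta)$.

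The estimate~\eqref{beta ge 1} follows from~\eqref{beta gt 0} by specializing to $\beta\ge 1$: the convexity bound $e^x-1\ge x$ gives $e^{1/(2\beta)}-1\ge 1/(2\beta)$, and together with $e^{1/\beta}\le e$ this bounds the common prefactor $\frac{e^{1/\beta}}{e^{1/(2\beta)}-1}$ by $O(\beta)$ while shrinking $\frac{e^{1/\beta}}{\beta}$ to $O(1)$; finally $1/\beta\le 1$ is absorbed into $\log B$. The main technical nuisance throughout is endpoint bookkeeping for the partition, together with the small-$j$ regime in which $E_j=1$ for several consecutive indices; both issues are handled cleanly by the uniform inequality $E_j\ge\tfrac12 e^{j/\beta}$, after which only elementary estimates for geometric and geometric-times-polynomial sums remain.
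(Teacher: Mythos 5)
Your proof is correct and follows essentially the same route as the paper's: partition $(A,B]$ into the blocks $[E_j,E_{j+1}]$, bound $n^{-1/2}\ll e^{-j/2\beta}$ on each block (the paper does this via $n\ge E_j+1>e^{j/\beta}$ rather than your $E_j\ge\tfrac12 e^{j/\beta}$, a cosmetic difference), apply Lemma~\ref{one beta interval lemma}, and sum the two resulting geometric series against the endpoint estimates. The one place you wave your hand a little is ``$1/\beta\le1$ is absorbed into $\log B$'' in deducing~\eqref{beta ge 1}, which fails as stated when $B$ is close to $1$ (so $\log B$ is tiny) --- precisely the edge case the paper itself parenthetically flags as needing care; a quick patch is to observe that for $B<2$ the sum in~\eqref{beta ge 1} has at most the single term $n=1$, contributing at most $T\le\beta TA^{-1/2}$, while for $B\ge2$ one has $1/\beta\le1\ll\log B$ as you claim.
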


\begin{proof}
If we choose nonnegative integers $I$ and $J$ such that $e^{-1/\beta}A < e^{I/\beta} \le A$ and $B \le e^{J/\beta} < Be^{1/\beta}$, we have
\begin{align*}
\sum_{A< n\le B} n^{-1/2} \min\{ T, \| \beta\log n \|^{-1} \} &\le \sum_{E_I< n\le E_J} n^{-1/2} \min\{ T, \| \beta\log n \|^{-1} \} \\
& \ll \sum_{I\le j\le J} (E_j+1)^{-1/2} \sum_{E_j < n \le E_{j+1}} \min\{ T, \| \beta\log n \|^{-1} \} \\
&\ll \sum_{I\le j\le J} e^{-j/2\beta} \bigg( T + \frac{e^{1/\beta}}{\beta} \frac{j+1}{\beta} e^{j/\beta} \bigg)
\end{align*}
by Lemma~\ref{one beta interval lemma}. This last expression is essentially the sum of two geometric series:
\begin{align*}
\sum_{A\le n\le B} n^{-1/2} \min\{ T, \| \beta\log n \|^{-1} \} &\ll T \sum_{j\ge I} e^{-j/2\beta} + \frac{e^{1/\beta}}{\beta}
\frac{J}{\beta}   \sum_{j\le J} e^{j/2\beta} \\
&\ll T \frac{e^{-I/2\beta}}{1-e^{-1/2 \beta}} + \frac{e^{1/\beta}}{\beta} \frac J\beta \frac{e^{J/2\beta}}{1-e^{-1/2 \beta}}.
\end{align*}
Since $e^{I/2\beta} > A^{1/2}e^{-1/2\beta}$ and $e^{J/2\beta} < B^{1/2}e^{1/2\beta}$ (which in turn implies $\frac J\beta < \log B + \frac1\beta$), this becomes
\begin{equation*}
\sum_{A\le n\le B} n^{-1/2} \min\{ T, \| \beta\log n \|^{-1} \} \ll \frac{e^{1/2\beta}}{1-e^{-1/2 \beta}} \bigg( TA^{-1/2} + \frac{e^{1/\beta}}{\beta} (\log B +\beta^{-1}) B^{1/2} \bigg),
\end{equation*}
which is the same as the assertion~\eqref{beta gt 0}. When $\beta\ge1$, the simpler assertion~\eqref{beta ge 1} follows the first assertion by bounding the appropriate functions of $\beta$ (although it must be checked a little carefully that the bound is indeed valid when $B$ is very close to~1). 
\end{proof}

\section{Many points in arithmetic progression are not zeros}
\label{main proof}

Having completed all of the preliminaries necessary to proceed to the proofs of the main theorems, in this section we establish Theorem~\ref{main theorem}. As mentioned in Section~\ref{strategy}, to prove Theorem~\ref{main theorem} it suffices to verify the estimates~\eqref{eq:1st moment bound} and~\eqref{eq:2nd moment bound}. These estimates are established below in Propositions~\ref{main thm first moment prop} and~\ref{main thm second moment prop}, respectively.

Let $\chi\mod q$ be a fixed primitive character. Let $\alpha$ and $\beta$ be fixed real numbers, and define $s_k = \tfrac{1}{2} + 2 \pi i (\alpha + k \beta)$. Recall that for $P_1(x)=x$, 
\[
  M_1(s) = M(s,\chi,P_1) = \sum_{m\le X} \frac{\chi(m)\mu(m)}{m^s} \bigg( 1 - \frac{\log m}{\log X} \bigg);
\]
here $X=T^\theta$ where the parameter $0<\theta<1$ will be specified later. We note the trivial bound
\begin{equation}
M_1(\tfrac12+it) \ll \sum_{m\le X} m^{-1/2} \ll X^{1/2}.
\label{M trivial bound}
\end{equation}
We define the corresponding first and second moments
\begin{equation}
   S_1(T,P_1) = \sum_{k=1}^T L(s_k, \chi) M_1(s_k)
\label{S1 def}
\end{equation}
and 
\begin{equation}
   S_2(T,P_1) = \sum_{k=1}^T |L(s_k, \chi) M_1(s_k)|^2.
\label{S2 def}
\end{equation}
The smoothing factor $1-(\log m)/\log X$ in the definition of the mollifier $M_1(s)$ has been introduced so that evaluating the second moment is more convenient, although it slightly complicates the evaluation of the first moment.

In the course of our analysis, we will encounter the coefficients
\begin{equation}
  a_n = \sum_{\substack{\ell m=n \\ \ell\le qU \\ m\le X}} \mu(m) \bigg( 1 - \frac{\log m}{\log X} \bigg),
\label{an definition}
\end{equation}
where $U$ is a parameter to be specified later. The mollifier $M_1(s)$ has been chosen to make the coefficients $a_n$ small; the following lemma gives us more precise information.

\begin{lemma}
We have $a_n = \Lambda(n)/\log X$ for $1<n\le X$, while $|a_n| \le 2^{\omega(n)}$ for $X < n \le qUX$.
\label{an lemma}
\end{lemma}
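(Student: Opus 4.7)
The plan is a direct computation from the definition of $a_n$ in equation~\eqref{an definition}, splitting into the two ranges.

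First, for $1 < n \le X$, I would argue that both constraints $m \le X$ and $\ell \le qU$ in the definition are automatic. Indeed, every divisor $m$ of $n$ satisfies $m \le n \le X$, and the complementary factor $\ell = n/m$ likewise satisfies $\ell \le n \le X \le qU$ (using the implicit standing assumption $qU \ge X$ that governs the choice of parameters in Section~\ref{main proof}). So the sum collapses to an unrestricted divisor sum:
\[
   a_n = \sum_{m \mid n} \mu(m)\bigg(1 - \frac{\log m}{\log X}\bigg) = \sum_{m \mid n} \mu(m) - \frac{1}{\log X}\sum_{m \mid n} \mu(m)\log m.
\]
The first sum vanishes since $n>1$, and the second is the classical M\"obius identity $\sum_{m\mid n} \mu(m)\log m = -\Lambda(n)$ (differentiating $\sum_{m\mid n}\mu(m)m^{-s}$ at $s=0$, or equivalently inverting $\log n = \sum_{m\mid n}\Lambda(m)$). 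This yields $a_n = \Lambda(n)/\log X$ exactly.

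Second, for $X < n \le qUX$, I would simply take absolute values termwise. For each pair $(\ell,m)$ with $\ell m = n$, $\ell \le qU$, $m \le X$, we have $|\mu(m)| \le 1$ (and it vanishes unless $m$ is squarefree), while $0 \le \log m/\log X \le 1$ so $|1 - \log m/\log X| \le 1$. Therefore
\[
   |a_n| \le \sum_{\substack{m \mid n \\ m \le X \\ m \text{ squarefree}}} 1 \le \sum_{\substack{m \mid n \\ m \text{ squarefree}}} 1 = 2^{\omega(n)},
\]
which is the claimed bound.

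The main (and essentially only) subtle point is verifying that the constraint $\ell \le qU$ really is inactive in the first range; this is where one must rely on the quantitative relationship between $U$ and $X$ chosen later in the argument. Everything else is elementary Dirichlet convolution manipulation, so I do not expect any real obstacle.
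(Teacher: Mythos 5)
Your proof is correct and follows essentially the same path as the paper's: take absolute values termwise and count squarefree divisors for the upper bound, and for $1<n\le X$ observe that both constraints become vacuous so the sum collapses to an unrestricted divisor sum evaluated via $\sum_{m\mid n}\mu(m)=0$ and $\sum_{m\mid n}\mu(m)\log m=-\Lambda(n)$. The one small thing you add that the paper glosses over is the explicit check that $\ell=n/m\le n\le X\le qU$, which indeed holds for the parameters chosen in Section~\ref{main proof}.
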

\begin{proof}
The bound $|a_n| \le 2^{\omega(n)}$ for any $n$ follows from the fact that each term in the sum~\eqref{an definition} defining $a_n$ is at most~1 in size, while the number of nonzero terms is at most the number of squarefree divisors~$m$ of~$n$. Now suppose $1<n\le X$, so that the sum defining $a_n$ includes all of the factorizations of $n$; we can then evaluate exactly
\[
a_n = \sum_{m\mid n} \mu(m) \bigg( 1 - \frac{\log m}{\log X} \bigg) = \sum_{m\mid n} \mu(m) - \frac1{\log X} \sum_{m\mid n} \mu(m)\log m = 0 + \frac1{\log X} \Lambda(n)
\]
using two well-known divisor sums.
\end{proof}

We are now ready to establish the required lower bound for the first moment $S_1(T,P_1)$; in fact we can obtain an asymptotic formula.

\begin{prop}  \label{main thm first moment prop}
Let $T$ be a positive integer, and define $S_1(T,P_1)$ as in equation~\eqref{S1 def}, where $X=T^{\theta}$ with $0 < \theta < 1$. Then $S_1(T,P_1) = T + O_{\chi,\alpha,\beta,\theta}  \big( {T}/{\sqrt{\log T}}\big)$.
\end{prop}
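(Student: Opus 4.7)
The plan is to substitute for $L(s_k,\chi)$ using the approximate functional equation (Proposition~\ref{afedirichlet}). I would apply it with $x=U$, where $U$ is chosen of size $O_{\alpha,\beta}(T)$, large enough that the hypothesis $x>C|t_k|/2\pi$ holds uniformly for $1\le k\le T$; taking $\sigma=\tfrac12$, the resulting error is $O_{\alpha,\beta,q}(T^{-1/2})$. Multiplying by $M_1(s_k)$, using complete multiplicativity $\chi(\ell)\chi(m)=\chi(\ell m)$, and regrouping terms by $n=\ell m$ yields $L(s_k,\chi)M_1(s_k)=\sum_{n\le qUX}\chi(n)a_n n^{-s_k}+O(T^{-1/2}|M_1(s_k)|)$, where $a_n$ is the coefficient in~\eqref{an definition}. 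Summing over $k$ and swapping the order of summation gives
\[
S_1(T,P_1)=\sum_{n\le qUX}\frac{\chi(n)a_n e^{-2\pi i\alpha\log n}}{\sqrt n}\sum_{k=1}^T e^{-2\pi i k\beta\log n} + \text{(error terms)}.
\]
The diagonal $n=1$ contributes exactly $T$ (since $a_1=1$), providing the main term.

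For $n\ge 2$, the inner geometric sum has modulus at most $\min\{T,\|\beta\log n\|^{-1}\}$, so the off-diagonal total is bounded by $\sum_{n\ge2}|a_n|n^{-1/2}\min\{T,\|\beta\log n\|^{-1}\}$. Guided by Lemma~\ref{an lemma}, I would split this sum at a threshold $Y$ (to be chosen) into three ranges. For $2\le n\le Y$, I would bound $\min\{T,\ldots\}\le T$ and use $|a_n|=\Lambda(n)/\log X$ together with Chebyshev's estimate $\sum_{n\le Y}\Lambda(n)n^{-1/2}\ll\sqrt Y$, obtaining a contribution of $\ll T\sqrt Y/\log X$. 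For $Y<n\le X$, I would use the trivial pointwise bound $\Lambda(n)/\log X\le 1$ together with Proposition~\ref{one fell swoop prop} (with $A=Y$, $B=X$), yielding $\ll_\beta TY^{-1/2}+X^{1/2}\log X$. For $X<n\le qUX$, the bound $|a_n|\le 2^{\omega(n)}\ll_\epsilon n^\epsilon$ combined with Proposition~\ref{one fell swoop prop} (with $A=X$, $B\asymp TX$) gives $\ll_{\beta,\epsilon} T^{\epsilon(1+\theta)}\bigl(TX^{-1/2}+(TX)^{1/2}\log T\bigr)$.

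The choice $Y=\log X$ balances the first two contributions so that each is $\ll T/\sqrt{\log X}\ll_\theta T/\sqrt{\log T}$. Since $X=T^\theta$ with $\theta<1$, all remaining pieces, namely $X^{1/2}\log X$, the third-range bound (taking $\epsilon$ sufficiently small in terms of $\theta$ so that both $1-\tfrac\theta2+\epsilon(1+\theta)<1$ and $\tfrac{1+\theta}2+\epsilon(1+\theta)<1$), and the AFE-error contribution $T^{-1/2}\sum_{k=1}^T|M_1(s_k)|\ll T^{1/2}\cdot X^{1/2}=T^{(1+\theta)/2}$, represent power savings and are absorbed into $O(T/\sqrt{\log T})$. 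The main obstacle is precisely securing this logarithmic saving over the trivial bound $O(T)$: this relies on Lemma~\ref{an lemma}'s identification $a_n=\Lambda(n)/\log X$ on $1<n\le X$, which restricts the small-$n$ contribution to prime powers where Chebyshev's bound supplies the extra $(\log X)^{-1/2}$ factor, and on tuning the split at $Y=\log X$ so that the two competing bounds meet at exactly this order of magnitude. Any loss in the weighting for $n>X$ is harmless because $\theta<1$ gives genuine power savings there.
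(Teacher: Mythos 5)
Your proposal is correct and follows essentially the same route as the paper's own proof: apply Proposition~\ref{afedirichlet} with a suitable $U\asymp T$, multiply by the mollifier, extract the $n=1$ term, bound the geometric sum by $\min\{T,\|\beta\log n\|^{-1}\}$, and split at $Y$ and $X$ using Lemma~\ref{an lemma} (Chebyshev in the first range, $\Lambda(n)\le\log X$ plus Proposition~\ref{one fell swoop prop} in the second, $2^{\omega(n)}\ll n^\epsilon$ plus Proposition~\ref{one fell swoop prop} in the third). The only cosmetic difference is your choice $Y=\log X$ versus the paper's $Y=\log T$, which for fixed $\theta$ are equivalent up to a constant and yield the same $O(T/\sqrt{\log T})$ error.
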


\begin{proof}
In this proof, all constants implicit in the $O$ and $\ll$ notations may depend upon $\chi$ (hence $q$), $\alpha$, $\beta$, and $\theta$. Set $U = 3(|\alpha|+|\beta|)T$, and notice that $U > 2 |\alpha + k \beta| = 2|\mathop{\rm Im} s_k|/2\pi$ for any $1\le k\le T$. Therefore Proposition~\ref {afedirichlet} can be applied (with $\sigma=\frac12$) to yield
\[
   L(s_k, \chi) = \sum_{\ell \le qU} \frac{\chi(\ell)}{\ell^{s_k}} + O ( T^{-1/2} ). 
\]
Using this truncated series and the bound~\eqref{M trivial bound}, we obtain
\begin{align*}
  L(s_k,\chi) M_1(s_k) &= \sum_{\ell\le qU} \frac{\chi(\ell)}{\ell^{s_k}} \sum_{m\le X} \frac{\chi(m)\mu(m)}{m^{s_k}} \bigg( 1 - \frac{\log m}{\log X} \bigg) + O\big( |M_1(s_k)| T^{-1/2}\big) \\
  &= \sum_{n\le qUX} \frac{\chi(n) a_n}{n^ {s_k}} + O\big( X^{1/2} T^{-1/2} \big),
\end{align*}
where $a_n$ is defined in equation~\eqref {an definition}. Therefore, since $a_1=1$,
\begin{align}
  S_1(T,P_1) &= \sum_{k=1}^T \bigg( \sum_{n\le qUX} \frac{\chi(n) a_n}{n^{s_k}} + O\big( X^{1/2} T^{-1/2} \big) \bigg) \notag \\
  &= \sum_{n\le qUX} \frac{\chi(n)a_n}{n^{1/2+2\pi i\alpha}} \sum_{k=1}^T n^{-2\pi i k\beta} + O\big( T^{1/2}X^{1/2} \big) \notag \\
  &= T + O\bigg( \sum_{1<n\le qUX} \frac{|a_n|}{n^{1/2}} \min\{ T, \| \beta\log n \|^{-1} \} + T^{1/2}X^{1/2} \bigg),
  \label{used geometric series}
\end{align}
where we have used the standard geometric series estimate
\begin{equation}
  \label{eq:geometric sum}
  \sum_{k=1}^T n^{-2\pi ik\beta} \ll \min\{ T, \| \beta\log n \|^{-1} \}.
\end{equation}

Thanks to Lemma~\ref{an lemma}, the sum in the error term in equation~\eqref{used geometric series} can be bounded above by
\begin{multline*}
\sum_{1<n\le qUX} \frac{|a_n|}{n^{1/2}} \min\{ T, \| \beta\log n \|^{-1} \} \le \frac1{\log X} \sum_{1<n\le Y} \frac{\Lambda(n)}{n^{1/2}} \min\{ T, \| \beta\log n \|^{-1} \} \\
+ \frac1{\log X} \sum_{Y<n\le X} \frac{\Lambda(n)}{n^{1/2}} \min\{ T, \| \beta\log n \|^{-1} \} + \sum_{X<n\le qUX} \frac{2^{\omega(n)}}{n^{1/2}} \min\{ T, \| \beta\log n \|^{-1} \}
\end{multline*}
for any positive integer $Y \le X$. In the first sum we bound the minimum by $T$ and use the Chebyshev bound $\sum_{n \le x} \Lambda(n) \ll x$; in the second sum we use $\Lambda(n) \le \log X$; and in the final sum we use $2^{\omega(n)} \ll n^\ep$, where $\ep = \frac15\min\{ \theta,1-\theta \}$. The result is
\begin{multline*}
\sum_{1<n\le qUX} \frac{|a_n|}{n^{1/2}} \min\{ T, \| \beta\log n \|^{-1} \} \ll \frac{TY^{1/2}}{\log X} \\
+ \sum_{Y<n\le X} n^{-1/2} \min\{ T, \| \beta\log n \|^{-1} \} + (qUX)^\ep \sum_{X<n\le qUX} n^{-1/2} \min\{ T, \| \beta\log n \|^{-1} \},
\end{multline*}
which in conjunction with Proposition~\ref{one fell swoop prop} yields
\begin{multline*}
\sum_{1<n\le qUX} \frac{|a_n|}{n^{1/2}} \min\{ T, \| \beta\log n \|^{-1} \} \ll \frac{T Y^{1/2}}{\log X} 
+  T Y^{-1/2} + X^{1/2} \log X \\
+ T^{2\ep} \big( T X^{-1/2} + (TX)^{1/2} \log TX \big).
\end{multline*}
We conclude from equation~\eqref {used geometric series} that
\begin{multline*}
S_1(T,P_1) = T + O\bigg( \frac{TY^{1/2}}{\log X} +  T Y^{-1/2} +   X^{1/2} \log T \\
+ T^{2\ep} \big( T X^{-1/2} + (TX)^{1/2} \log T \big) + T^{1/2}  X^{1/2} \bigg).
\end{multline*}
Choosing $Y=\log T$ and simplifying the error terms yields
\begin{align*}
 S_1(T,P_1) & = T + O \bigg( \frac T{\sqrt{\log X}} + T^{2\ep} \big( TX^{-1/2} + T^{1/2}X^{1/2} \log T \big) \bigg) \\
         & = T + O \bigg( \frac T{\sqrt{\log T}} + T^{1-\theta/2+2\ep}+ T^{(\theta+1)/2+2\ep} \log T  \bigg).
\end{align*}
By the choice of $\ep$, the error term is $O\big( T/\sqrt{\log T} \big)$, which establishes the proposition.
\end{proof}

We turn now to the derivation of an upper bound for the corresponding second moment $S_2(T,P_1)$. Rather than parallel the evaluation of the first moment by using a discrete method, we will employ a version of Gallagher's lemma to convert the problem into one of bounding certain integrals. We say that a set $\S$ of real numbers is {\em $\kappa$-well-spaced} if $|u-u'| \ge \kappa$ for any distinct elements $u$ and $u'$ of~$\S$.

\begin{lemma}
Let $F$ and $G$ be differentiable functions, and let $T_1<T_2$ be real numbers. If $\S$ is a $\kappa $-well-spaced set of real numbers contained in $[T_1+ \kappa/2,T_2-\kappa/2]$, then
\begin{multline*}
  \sum_{u\in\S} |F(u)G(u)|^2  \ll_\kappa \int_{T_1}^{T_2} \big| F(t) G(t) \big|^2 \, dt \\
  + 
  \bigg( \int_{T_1}^{T_2} \big| F(t) G(t) \big|^2 \, dt  \bigg)^{1/2}
   \bigg(  \int_{T_1}^{T_2} \big| F(t) G'(t) \big|^2 \, dt  \bigg)^{1/2}  \\
  +  \bigg( \int_{T_1}^{T_2} \big| F(t) G(t) \big|^2 \, dt  \bigg)^{1/2}
    \bigg( \int_{T_1}^{T_2} \big| F'(t) G(t) \big|^2 \, dt \bigg)^{1/2}.
\end{multline*}
\label{product Gallagher lemma}
\end{lemma}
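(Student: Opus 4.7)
The plan is to reduce the statement to the standard single-function form of Gallagher's lemma (\cite[Lemma 1.2]{Mo}), applied to the product $h(t) = F(t) G(t)$. That standard version says: for any $\kappa$-well-spaced set $\S \subset [T_1 + \kappa/2, T_2 - \kappa/2]$ and any continuously differentiable function $h$,
\[
\sum_{u \in \S} |h(u)|^2 \ll_\kappa \int_{T_1}^{T_2} |h(t)|^2 \, dt + \bigg( \int_{T_1}^{T_2} |h(t)|^2 \, dt \bigg)^{1/2} \bigg( \int_{T_1}^{T_2} |h'(t)|^2 \, dt \bigg)^{1/2}.
\]
Taking $h = FG$ reproduces the first term on the right-hand side of the claimed inequality verbatim, so everything reduces to controlling the integral of $|h'|^2$ in terms of the two mixed quantities $\int |F' G|^2$ and $\int |F G'|^2$.

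For this I would expand $h'(t) = F'(t) G(t) + F(t) G'(t)$ and apply the elementary bound $|a+b|^2 \le 2|a|^2 + 2|b|^2$ to obtain
\[
\int_{T_1}^{T_2} |h'(t)|^2 \, dt \le 2 \int_{T_1}^{T_2} |F'(t) G(t)|^2 \, dt + 2 \int_{T_1}^{T_2} |F(t) G'(t)|^2 \, dt.
\]
Then the subadditivity inequality $\sqrt{a + b} \le \sqrt{a} + \sqrt{b}$, valid for $a, b \ge 0$, splits the square root of this sum into two pieces. Inserting these into the application of the standard Gallagher bound above yields exactly the three terms on the right-hand side of the statement, with the implicit constant still depending only on~$\kappa$.

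In summary, the proof is essentially routine manipulation once one invokes the classical Gallagher inequality for $h = FG$; there is no genuine obstacle, since both the product rule and the two elementary inequalities $|a+b|^2 \le 2(|a|^2 + |b|^2)$ and $\sqrt{a+b} \le \sqrt{a} + \sqrt{b}$ require no hypotheses beyond the differentiability of $F$ and~$G$ already assumed. The only thing to verify carefully is that the hypothesis $\S \subset [T_1 + \kappa/2, T_2 - \kappa/2]$ matches the hypothesis under which \cite[Lemma 1.2]{Mo} is stated, which it does.
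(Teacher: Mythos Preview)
Your proposal is correct and follows essentially the same route as the paper: apply Gallagher's lemma to the product $h=FG$ and then split the derivative contribution. The only cosmetic difference is that the paper uses the form $\sum|H(u)|^2 \le \kappa^{-1}\int|H|^2 + 2\int|HH'|$ of \cite[Lemma~1.2]{Mo} (which is what that reference literally gives after substituting $f=H^2$), expands $HH' = FG(F'G+FG')$ by the triangle inequality, and then applies Cauchy--Schwarz separately to $\int|FG||F'G|$ and $\int|FG||FG'|$; you instead quote the already Cauchy--Schwarzed variant and split $\int|h'|^2$ afterwards via $|a+b|^2\le 2|a|^2+2|b|^2$ and $\sqrt{a+b}\le\sqrt a+\sqrt b$.
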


\begin{proof}
Setting $f(t) = H(t)^2$ in~\cite[Lemma 1.2]{Mo} gives
\[
\sum_{u\in\S} |H(u)|^2 \le \frac1 \kappa \int_{T_1}^{T_2}|H(t)|^2 \, dt  + \int_{T_1}^{T_2} |2H(t)H'(t)|  \,dt.
\]
When we set $H(u) = F(u)G(u)$ and use the triangle inequality, we obtain
\[
\sum_{u\in\S} |F(u)G(u)|^2 \ll_\kappa \int_{T_1}^{T_2} |F(t)G(t)|^2 \,dt + \int_{T_1}^{T_2} |F^2(t)G(t)G'(t)| \,dt + \int_{T_1}^{T_2} |F(t)F'(t)G^2(t)| \,dt.
\]
Applying the Cauchy--Schwarz inequality to the last two integrals establishes the proposition.
\end{proof}

Lemma~\ref {product Gallagher lemma} will allow us to reduce the proof of Proposition~\ref {main thm second moment prop} to evaluating integrals of the shape
\[
   \int_{T_1}^{T_2} |L(\tfrac12+iu,\chi) A(\tfrac12+iu)|^2 \,du  \quad\text{and}\quad \int_{T_1}^{T_2} |L'(\tfrac12+iu,\chi) A(\tfrac12+iu)|^2 \,du  
\]
for certain Dirichlet polynomials $A(s)$. Integrals of this type have played an important role in
analytic number theory. For instance, they were introduced by Bohr and Landau~\cite{BL} in their work 
on zero-density estimates. In~\cite{Le}, Levinson employed such integrals in his famous proof that at least one third of the zeros of the Riemann zeta function are simple and lie on the critical line. Recently, Bauer \cite{Ba} studied
integrals of the above type in order to establish analogues of Levinson's theorem to Dirichlet $L$-functions. 
In his work he proves the following result. Recall that $M(s,\chi,P)$ was defined in equation~\eqref{eq:mollifier}.

\begin{lemma}
\label{Bauer lemma}
Let $\chi\mod q$ be a Dirichlet character. Let $Q_1$ and $Q_2$ be polynomials with $Q_1(0)=Q_2(0)=0$, and let $B_1$ and $B_2$ be the sum of the absolute values of the coefficient of $Q_1$ and $Q_2$, respectively. Let $a$ and $b$ be distinct, nonzero complex numbers of modulus at most~$1$. Let $0<\ep<\frac12$ be a real number, let $T$ and $X$ be real numbers satisfying $T^\ep \le X \le T^{1/2-\ep}$, and write $\L = \log ( qT/2\pi )$. There exists a positive real number $\delta=\delta(\ep)$ such that
\begin{multline}
\label{take partials of this}
  \int_1^T L\big( \tfrac12+it + \tfrac a\L, \chi \big) L\big( \tfrac12-it - \tfrac b\L, \bar\chi \big) M\big( \tfrac12+it, \chi, Q_1 \big) M\big( \tfrac12-it, \bar\chi,Q_2 \big) \,dt \\
  {}= T \bigg( Q_1(1)Q_2(1) + \frac{e^{b-a}-1}{b-a} E \bigg) + O_\ep(B_1B_2T^{1-\delta}),
\end{multline}
where
\begin{multline}
\label{E def}
E = \frac{\log T}{\log X} \int_0^1 Q_1'(t) Q_2'(t) \,dt + \int_0^1 \big( bQ_1'(t)Q_2(t) - aQ_1(t)Q_2'(t) \big) \,dt \\
{}- ab\frac{\log X}{\log T} \int_0^1 Q_1(t)Q_2(t) \, dt.
\end{multline}
In particular,
\begin{equation}
\label{Q2 equals Q1}
  \int_1^T \big| L\big( \tfrac12+it, \chi \big) M\big( \tfrac12+it, \chi, Q_1 \big) \big|^2 \,dt = T \bigg( Q_1(1)^2 + \frac{\log T}{\log X} \int_0^1 Q_1'(t)^2 \,dt \bigg) + O_\ep(B_1^2T^{1-\delta}).
\end{equation}
The $O$-constants are uniform in all the parameters as long as $q=o(\log T)$.
\end{lemma}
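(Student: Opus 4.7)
The plan is to carry out a shifted mollified second-moment computation in the style of Levinson, Conrey, and Bauer. First I would apply a smoothed approximate functional equation (a variant of Proposition~\ref{afedirichlet}) to each $L$-factor, writing
\[
L\bigl(\tfrac12+it+\tfrac{a}{\L},\chi\bigr) = \sum_{m} \frac{\chi(m) V_+(m,t)}{m^{1/2+it+a/\L}} + \gamma_a(t) \sum_{m} \frac{\bar\chi(m) V_-(m,t)}{m^{1/2-it-a/\L}} + O(T^{-A}),
\]
where $V_\pm$ are smooth cutoffs supported essentially up to $Y \asymp (qT/2\pi)^{1/2}$ and $\gamma_a(t)$ is the gamma-factor ratio from the functional equation. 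Multiplying out with the corresponding expansion for $L(\tfrac12 - it - b/\L,\bar\chi)$ and the Dirichlet polynomials $M(\tfrac12\pm it,\cdot,Q_i)$, the integrand splits into a "straight" configuration, a "doubly flipped" configuration that is equivalent to the straight one by the functional equation, and two cross configurations. Each cross configuration picks up the factor
\[
(qT/2\pi)^{(a-b)/\L} = e^{a-b} + O(1/\log T),
\]
which combined with the $T$ from the eventual diagonal evaluation produces the $(e^{b-a}-1)/(b-a)$ factor in the main term.

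Within each configuration I would expand the resulting product of four Dirichlet sums as a quadruple sum over $(m_1,h_1,m_2,h_2)$ and integrate $t$ from $1$ to $T$. This yields $T+O(1)$ on the diagonal $m_1 h_1 = m_2 h_2$ and an oscillatory bound $\ll |\log(m_1 h_1/m_2 h_2)|^{-1}$ otherwise. For the diagonal, I set $n = m_1 h_1 = m_2 h_2$, decompose via $d = \gcd(h_1,h_2)$, and use the squarefreeness forced by $\mu(h_1)\mu(h_2)$ to recognize an Euler product that factors as $\zeta(1+z_1+z_2)$ times a Dirichlet series holomorphic in a small right half-plane. Writing each smoothing $Q_i(1-\log h_i/\log X)$ via its Mellin transform $\tilde Q_i$, the diagonal becomes a double contour integral whose only pole comes from $\zeta(1+z_1+z_2)$. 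Evaluating the residue at $z_1+z_2=0$ and matching against the explicit Mellin transform of the polynomial weight produces the quantity $T\bigl(Q_1(1)Q_2(1) + \tfrac{\log T}{\log X}\int_0^1 Q_1'(t)Q_2'(t)\,dt\bigr)$ from the straight configuration; after combining with $e^{a-b}$ and its analogue from the two cross configurations, one recovers the full main term with $E$ as in \eqref{E def}.

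The main obstacle is the off-diagonal bound $O_\ep(B_1 B_2 T^{1-\delta})$. The hypothesis $X \le T^{1/2-\ep}$ is essential here: it guarantees $m_i h_i \le YX \le T^{1-\ep/3}$, so that for any pair with $m_1 h_1 \ne m_2 h_2$ one has $|\log(m_1 h_1/m_2 h_2)|^{-1} \ll T^{1-\ep/3}$. Grouping off-diagonal pairs by the shift $r = m_1 h_1 - m_2 h_2$ and summing the resulting shifted divisor convolutions, controlled by standard moment estimates such as $\sum_{n \le N}\tau_k(n)^2 \ll N (\log N)^{O(1)}$ together with geometric-sum estimates in the spirit of Proposition~\ref{one fell swoop prop}, yields the claimed off-diagonal bound for some $\delta = \delta(\ep) > 0$. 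The coefficient-size dependence enters only through $B_1 B_2$, since the polynomial weights $Q_i(1-\log h/\log X)$ are uniformly bounded by $B_i$; uniformity in $|a|,|b| \le 1$ and in $q = o(\log T)$ then follows from the boundedness of the shift parameters on the chosen contours.
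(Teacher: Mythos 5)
The paper does not prove this lemma from scratch: it simply quotes Bauer's Theorem~1 from~\cite{Ba}, with the one remark that the error constant is proportional to $B_1 B_2$ because the polynomial weights enter linearly, and then derives the displayed special case~\eqref{Q2 equals Q1} by setting $Q_2=Q_1$ and letting $a,b\to 0$. You have instead sketched a proof of Bauer's theorem itself via an approximate functional equation, diagonal/off-diagonal split, and Mellin-transform residue evaluation, which is a genuinely different route and indeed the route Bauer's own paper follows. That is a legitimate choice, but it is a large piece of work being glossed as a sketch, so the fine structure needs to be right.

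There are two concrete problems. First, your explanation of where the factor $\frac{e^{b-a}-1}{b-a}$ comes from is not correct: you assert that the swap configuration contributes a constant factor $(qT/2\pi)^{(a-b)/\L}=e^{a-b}+O(1/\log T)$ which ``combined with the $T$ from the eventual diagonal evaluation'' yields $\frac{e^{b-a}-1}{b-a}$, but $e^{a-b}\cdot T$ is not $T\cdot\frac{e^{b-a}-1}{b-a}$. The latter is $T\int_0^1 e^{(b-a)u}\,du$, and it arises because the gamma/conductor factor $(qt/2\pi)^{(b-a)/\L}$ sits \emph{inside} the $t$-integral; substituting $\log(qt/2\pi)=u\L$ converts $\frac1T\int_1^T (qt/2\pi)^{(b-a)/\L}\,dt$ into $\int_0^1 e^{(b-a)u}\,du + o(1)$. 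Evaluating the root number at the endpoint $t=T$, as your sketch does, loses exactly this averaging and cannot reproduce the stated main term. Second, you never treat the ``In particular'' statement~\eqref{Q2 equals Q1}: it does not follow directly from your residue calculation but from the identity~\eqref{take partials of this} by taking $Q_2=Q_1$ and passing to the limit $a,b\to 0$ in $\frac{e^{b-a}-1}{b-a}E$, which the paper makes explicit and which needs to be justified since $a,b$ are assumed distinct and nonzero in the hypotheses. Absent those two repairs, the sketch does not yet constitute a proof of the lemma.
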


\begin{proof}
The first assertion is simply a rephrasing of \cite[Theorem 1]{Ba}, with the dependence of the error term on the coefficients of $Q_1$ and $Q_2$ made explicit; the second assertion follows from the first on setting $Q_2=Q_1$ and letting $a$ and $b$ tend to~0.
\end{proof}

\begin{lemma}
\label{Bauer derivative lemma}
With the notation and assumptions of Lemma~\ref{Bauer lemma}, we have
\[
\int_1^T \big| L'\big( \tfrac12+it, \chi \big) M\big( \tfrac12+it, \chi, Q_1 \big) \big|^2 \,dt = \tilde E T\L^2 + O_\ep(B_1^2T^{1-\delta}\L^2),
\]
where
\[
\tilde E = \frac{\log T}{3\log X} \int_0^1 Q_1'(t)^2 \,dt + \int_0^1 Q_1(t)Q_1'(t) \,dt + \frac{\log X}{\log T} \int_0^1 Q_1(t)^2 \, dt.
\]
The $O$-constant is uniform in all the parameters as long as $q=o(\log T)$.
\end{lemma}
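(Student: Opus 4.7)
The plan is to deduce this lemma from Lemma~\ref{Bauer lemma} by applying $\partial_a\partial_b$ to the identity~\eqref{take partials of this} with $Q_2=Q_1$ and then evaluating at $a=b=0$. Both sides of~\eqref{take partials of this} are holomorphic in $(a,b)$ on the polydisk $\{|a|,|b|\le 1\}$ (the apparent singularity of $(e^{b-a}-1)/(b-a)$ at $a=b$ is removable), so this operation is legitimate. Using $L(\tfrac12-it,\bar\chi)=\overline{L(\tfrac12+it,\chi)}$ and $M(\tfrac12-it,\bar\chi,Q_1)=\overline{M(\tfrac12+it,\chi,Q_1)}$, and noting that each $a$- or $b$-differentiation of a shifted $L$-factor brings out a factor of $\pm\L^{-1}$ by the chain rule, the differentiated left-hand side evaluates at $a=b=0$ to
\[
-\frac{1}{\L^2}\int_1^T \bigl|L'(\tfrac12+it,\chi)\bigr|^2\bigl|M(\tfrac12+it,\chi,Q_1)\bigr|^2\,dt.
\]

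Next I would carry out the corresponding computation for the main term on the right-hand side of~\eqref{take partials of this}. The factor $Q_1(1)Q_2(1)=Q_1(1)^2$ is independent of $(a,b)$ and contributes nothing. Setting $u=b-a$ and $g(u)=(e^u-1)/u=1+u/2+u^2/6+\cdots$, and writing
\[
A=\tfrac{\log T}{\log X}\int_0^1 Q_1'(t)^2\,dt,\qquad B=\int_0^1 Q_1(t)Q_1'(t)\,dt,\qquad C=\tfrac{\log X}{\log T}\int_0^1 Q_1(t)^2\,dt,
\]
the remaining coefficient of $T$ is $g(u)E(a,b)=Ag(u)+B(e^u-1)-Cab\,g(u)$. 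A routine calculation gives $\partial_a\partial_b g(u)=-g''(u)$, $\partial_a\partial_b(e^u-1)=-e^u$, and $\partial_a\partial_b(ab\,g(u))|_{a=b=0}=g(0)=1$; together with $g''(0)=1/3$ these yield $-A/3-B-C=-\tilde E$ at $(0,0)$. Multiplying by $T$ and equating with the left-hand side (after multiplying by $-\L^2$) produces the claimed main term $\tilde E\,T\L^2$.

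The last step is to show that the error term survives this double differentiation. Let $R(a,b)$ denote the difference between the two sides of~\eqref{take partials of this}; then $R$ is holomorphic on $\{|a|,|b|\le 1\}$ and uniformly bounded by $O_\ep(B_1^2 T^{1-\delta})$ there (with $B_2=B_1$). By Cauchy's integral formula,
\[
\partial_a\partial_b R(0,0)=\frac{1}{(2\pi i)^2}\oint_{|a|=r_1}\oint_{|b|=r_2}\frac{R(a,b)}{a^2 b^2}\,db\,da,
\]
where I choose, say, $r_1=1$ and $r_2=1/2$ so the contours lie in the region where Bauer's error bound applies and where $a\ne b$ is automatic. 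This gives $\partial_a\partial_b R(0,0)\ll_\ep B_1^2 T^{1-\delta}$, and multiplication by $-\L^2$ yields the stated error $O_\ep(B_1^2 T^{1-\delta}\L^2)$. The main obstacle is bookkeeping rather than conceptual: one must confirm that Bauer's error estimate is truly uniform over the polydisk (including on the diagonal $a=b$, where the main term's apparent singularity cancels) so that the Cauchy bound is legitimate; granting the statement of Lemma~\ref{Bauer lemma} as given, this is immediate.
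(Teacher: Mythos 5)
Your proposal follows the same strategy as the paper's proof: apply $\partial_a\partial_b$ to the identity of Lemma~\ref{Bauer lemma}, evaluate the left-hand side and the main term at $a=b=0$, and bound the differentiated error term by Cauchy's integral formula in two variables. The main-term computation (organizing $g(u)E$ as $Ag(u)+B(e^u-1)-Cab\,g(u)$ with $g''(0)=1/3$) agrees with the paper's expansion, and your contour choice $r_1=1$, $r_2=1/2$ plays the same role as the paper's $|z|=1/3$, $|w|=2/3$, namely keeping $a\ne b$ automatic while staying in the region where Bauer's bound applies.
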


\begin{proof}
Define
\begin{multline}
g(a,b) = \int_1^T L\big( \tfrac12+it + \tfrac a\L, \chi \big) L\big( \tfrac12-it - \tfrac b\L, \bar\chi \big) M\big( \tfrac12+it, \chi, Q_1 \big) M\big( \tfrac12-it, \bar\chi,Q_2 \big) \,dt \\
{}- T \bigg( Q_1(1)Q_2(1) + \frac{e^{b-a}-1}{b-a} E \bigg),
\label{gab def}
\end{multline}
where $E$ is defined in equation~\eqref {E def}; Lemma~\ref{Bauer lemma} tells us that $g(a,b) \ll_\ep B_1B_2T^{1-\delta}$ uniformly for $|a|,|b|\le 1$ with $a\ne b$. When we apply $\frac{\partial^2}{\partial a\, \partial b}$ to both sides of equation~\eqref{gab def} and then let $a$ and $b$ tend to zero, we obtain
\begin{multline*}
\frac{\partial^2g}{\partial a\,\partial b}(0,0) = -\frac1{\L^2} \int_1^T L'\big( \tfrac12+it, \chi \big) L'\big( \tfrac12-it, \bar\chi \big) M\big( \tfrac12+it, \chi, Q_1 \big) M\big( \tfrac12-it, \bar\chi,Q_2 \big) \,dt \\
{}+ T \bigg( \frac{\log T}{3 \log X} \int_0^1 Q_1'(t) Q_2'(t) \,dt + \frac12 \int_0^1 \big( Q_1'(t)Q_2(t) + Q_1(t)Q_2'(t) \big) \,dt \\
{}+\frac{\log X}{\log T} \int_0^1 Q_1(t)Q_2(t) \, dt \bigg);
\end{multline*}
upon setting $Q_2=Q_1$, this becomes
\begin{equation*}
\frac{\partial^2g}{\partial a\,\partial b}(0,0) = -\frac1{\L^2} \int_1^T \big| L'\big( \tfrac12+it, \chi \big) M\big( \tfrac12+it, \chi, Q_1 \big) \big|^2 \,dt + \tilde ET.
\end{equation*}
We have thus reduced the proposition to showing that $\frac{\partial^2g}{\partial a\,\partial b}(0,0) \ll_\ep B_1^2 T^{1-\delta}$ (when $Q_2=Q_1$).

From its definition~\eqref {gab def} we see that $g(a,b)$ is holomorphic in each variable. Thus if we set $S_1 = \{z\in \C \colon |z|=\frac13\}$ and $S_2 = \{w\in \C \colon |w|=\frac23\}$, then by Cauchy's integral formula,
\[
\frac{\partial g}{\partial a}(a,b) = \frac{1}{2 \pi i} \int_{S_1} \frac{g(z,b)}{(z-a)^2} \,dz
\]
when $|a|<\frac13$, and hence
\[
\frac{\partial^2 g}{\partial a \,\partial b}(a,b) = \frac{1}{(2 \pi i)^2} \int_{S_2} \int_{S_1} \frac{g(z,w)}{(z-a)^2 (w-b)^2} \,dz \,dw
\]
when $|b|<\frac23$. Setting $a=b=0$ and using the known bound $g(z,w) \ll_\ep B_1B_2T^{1-\delta}$, we obtain $\frac{\partial^2 g}{\partial a \,\partial b}(0,0) \ll_\ep B_1B_2T^{1-\delta}$ as required.
\end{proof}

\begin{prop}
\label{main thm second moment prop}
Let $T$ be a positive integer, and define $S_2(T,P_1)$ as in equation~\eqref{S2 def}, where $X=T^{\theta}$ with $0 < \theta < \frac12$. Then $S_2(T,P_1) \ll_{\chi,\alpha,\beta,\theta} T \log T$.
\end{prop}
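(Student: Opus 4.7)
Our plan is to pass from the discrete sum $S_2(T, P_1)$ to continuous integrals via Gallagher's lemma, then invoke Bauer's mean-value results. The set $\mathcal{S} = \{2\pi(\alpha+k\beta): 1 \le k \le T\}$ is $\kappa$-well-spaced with $\kappa = 2\pi|\beta|$. Choosing $T_1$ just below $\min \mathcal{S}$ and $T_2$ just above $\max \mathcal{S}$ (so that $T_2 - T_1 \asymp_\beta T$ and $\mathcal{S}\subset [T_1+\kappa/2,\, T_2-\kappa/2]$), applying Lemma \ref{product Gallagher lemma} with $F(t) = L(\tfrac12+it,\chi)$ and $G(t) = M_1(\tfrac12+it)$ yields
\[
S_2(T, P_1) \ll_\beta I_0 + I_0^{1/2}\bigl(I_L^{1/2} + I_M^{1/2}\bigr),
\]
where $I_0$, $I_L$, and $I_M$ denote the integrals of $|LM_1|^2$, $|L'M_1|^2$, and $|LM_1'|^2$ over $[T_1, T_2]$.

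Each of the three integrals now fits directly into Bauer's framework. The bound $I_0 \ll_\theta T$ comes from Lemma \ref{Bauer lemma}, equation \eqref{Q2 equals Q1}, applied with $Q_1 = P_1$; the hypothesis $T^\ep \le X \le T^{1/2-\ep}$ is met because $X = T^\theta$ with $0 < \theta < \tfrac12$. The bound $I_L \ll_\theta T(\log T)^2$ follows from Lemma \ref{Bauer derivative lemma} with the same $Q_1$. For $I_M$, a direct differentiation gives
\[
M_1'(s) = -\sum_{m\le X} \frac{\chi(m)\mu(m)\log m}{m^s}\Bigl(1-\frac{\log m}{\log X}\Bigr) = -\log X \cdot M(s,\chi, P_3),
\]
where $P_3(x) = x(1-x)$: with $y = (\log m)/\log X$ we have $(\log m)(1-\tfrac{\log m}{\log X}) = \log X\cdot y(1-y) = \log X\cdot P_3(1-y)$. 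Since $P_3(0) = P_3(1) = 0$ and $\int_0^1 P_3'(t)^2\,dt = \tfrac13$, Lemma \ref{Bauer lemma} applied with $Q_1 = P_3$ gives $\int_{T_1}^{T_2}|L\cdot M(s,\chi,P_3)|^2\,dt \ll T(\log T)/\log X$, and hence $I_M \ll (\log X)^2\cdot T(\log T)/\log X \ll_\theta T(\log T)^2$.

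Combining these estimates,
\[
S_2(T, P_1) \ll_\beta T + T^{1/2}\cdot T^{1/2}\log T + T^{1/2}\cdot T^{1/2}\log T \ll_{\chi,\alpha,\beta,\theta} T\log T,
\]
which is the desired conclusion. The principal technical point is aligning the interval $[T_1, T_2]$ with Bauer's lemma, which is stated for $\int_1^T$: since the discrepancies have lengths bounded in terms of $\alpha$ and $\beta$ and the integrands are locally of polylogarithmic size, this replacement contributes only a lower-order error. Otherwise, the crux of the argument is the observation that $M_1'$ is itself (up to the scalar $-\log X$) a mollifier of the same type treated by Bauer, with polynomial $P_3(x)=x(1-x)$ fortunately satisfying the constraint $P_3(0)=0$ demanded by Lemma \ref{Bauer lemma}; the rest is bookkeeping.
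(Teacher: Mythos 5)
Your proposal is correct and follows essentially the same route as the paper's proof: it passes to integrals via the product form of Gallagher's lemma with the same choice of $F$ and $G$, then applies Lemma~\ref{Bauer lemma} to the $|LM_1|^2$ and $|LM_1'|^2$ integrals and Lemma~\ref{Bauer derivative lemma} to the $|L'M_1|^2$ integral. The only cosmetic difference is that you factor out the scalar $-\log X$ and use $P_3(x)=x(1-x)$, whereas the paper absorbs the $\log X$ directly into $Q_1(x)=x(x-1)\log X$; these are algebraically identical and yield the same bound.
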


\begin{proof}
In this proof, all constants implicit in the $O$ and $\ll$ notations may depend upon $\chi$ (hence $q$), $\alpha$, $\beta$, and $\theta$.
Moreover, we may assume $\beta >0$, since $S_2(T,P_1)= \overline{S_2(T,P_1)}$. 
 If we set $T_1 = 2\pi(\alpha+\beta/2)$ and $T_2 = 2\pi(\alpha+\beta/2+\beta T)$, then the points $\{2\pi(\alpha+k \beta)\colon 1\le k\le T\}$ form a $\kappa$-well-spaced set contained in $[T_1+\kappa/2,T_2-\kappa/2]$, where $\kappa=2\pi\beta$. By Proposition~\ref {product Gallagher lemma} applied to this set with $F(t)=L(\frac12+it)$ and $G(t)=M_1(\frac12+it)$, we obtain
\begin{equation}
\label{used Gallagher}
S_2(T,P_1) \ll I_1 + I_1^{1/2} I_2^{1/2} + I_1^{1/2} I_3^{1/2},
\end{equation}
where we have defined
\begin{align*}
   I_1 &= \int_{T_1}^{T_2} |L(\tfrac{1}{2}+it,\chi)M_1(\tfrac{1}{2}+it)|^2 \,dt = \int_1^{T_2} |L(\tfrac{1}{2}+it,\chi)M_1(\tfrac{1}{2}+it)|^2 \,dt + O(1) \\
   I_2 &= \int_{T_1}^{T_2} |L(\tfrac{1}{2}+it,\chi) M_1'(\tfrac{1}{2}+it)|^2 \,dt = \int_1^{T_2} |L(\tfrac{1}{2}+it,\chi) M_1'(\tfrac{1}{2}+it)|^2 \,dt + O(1) \\
   I_3 &= \int_{T_1}^{T_2} |L'(\tfrac{1}{2}+it,\chi) M_1(\tfrac{1}{2}+it)|^2 \,dt = \int_1^{T_2} |L'(\tfrac{1}{2}+it,\chi) M_1(\tfrac{1}{2}+it)|^2 \,dt + O(1).
\end{align*}
(Actually the definition of $I_2$ should have $\frac d{dt} M_1(\frac12+it)$ instead of $M_1'(\frac12+it)$, but the two expressions differ only by a factor of $i$ which is irrelevant to the magnitude; a similar comment applies to the term $L'(\frac12+it,\chi)$ in~$I_3$.)

The first integral $I_1$ can be evaluated by Lemma~\ref{Bauer lemma}: taking $Q_1(x)=x$ in equation~\eqref{Q2 equals Q1} yields
\[
I_1 = T_2 \bigg( 1 + \frac{\log T_2}{\log X} \bigg) + O\big( T_2^{1-\delta} \big) \ll T.
\]
We next apply equation~\eqref{Q2 equals Q1} with $Q_1(x)=x(x-1)\log X$, so that $M(\frac12+iu,\chi,Q_1) = M_1'(\frac12+iu)$. This application of Lemma~\ref{Bauer lemma} to $I_2$ gives
\[
I_2 = \frac{T_2}3 \log T_2 \cdot \log X + O\big( T_2^{1-\delta}\log^2 X \big) \ll T\log^2 T.
\]
Finally, we apply Lemma~\ref {Bauer derivative lemma} with $Q_1(x)=x$ to evaluate the last integral:
\[
I_3 = \bigg( \frac{\log T_2}{3\log X} + \frac12 + \frac{\log X}{3\log T_2} \bigg) T_2 \log^2 (\tfrac{qT_2}{2 \pi}) + 
O\big( T_2^{1-\delta} \log^2 T_2 \big) \ll T\log^2 T.
\]
Inserting these three estimates into equation~\eqref {used Gallagher} establishes the desired upper bound $S_2(T,P_1) \ll T\log T$ and hence the proposition.
\end{proof}

As was mentioned in Section~\ref {strategy}, the combination of Propositions~\ref {main thm first moment prop} and~\ref {main thm second moment prop} completes the proof of Theorem~\ref{main theorem}. We remark that our evaluation of the expression on the right-hand side of equation~\eqref{used Gallagher} actually produces an asymptotic formula for that expression. Consequently, any attempt to remove the factor of $\log T$ from the upper bound would need to avoid invoking Gallagher's lemma.

\section{Lowest nonzero term }
\label {lowest nonzero proof}

In this final section, we establish Theorem \ref{lowest nonzero theorem} which provides a bound for the least $k$ for which $L(s_k,\chi) \ne 0$. Before we begin we define, for any positive real number $L$, the function
\begin{equation}
\label{w def}
w_L(x) = \exp\bigg( \frac{L\log x}{\log\log x} \bigg).
\end{equation}
It will be convenient to be able to manipulate expressions involving $w_L(x)$ according to the following lemma.

\begin{lemma}
\label{w simplify lemma}
For any positive real numbers $L$, $\alpha$, and $\ep$:
\begin{enumerate}
\item $w_L(x)^\alpha = w_{\alpha L}(x)$;
\item $w_L(x^\alpha) \ll_{L,\alpha,\ep} w_{\alpha L+\ep}(x)$;
\item if $y > x w_L(x)$, then $x \ll_{L,\ep} y/w_{L-\ep}(y)$.
\end{enumerate}
\end{lemma}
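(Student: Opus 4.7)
The plan is to prove each part by direct manipulation of the exponent in the definition $w_L(x)=\exp(L\log x/\log\log x)$, treating any range of bounded $x$ (or $y$) via the implicit constants.

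Part (a) is immediate: raising $w_L(x)$ to the $\alpha$-th power multiplies the exponent by $\alpha$, giving $\exp(\alpha L\log x/\log\log x)=w_{\alpha L}(x)$.

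For part (b), I would start from
\[
w_L(x^\alpha)=\exp\!\left(\frac{\alpha L\log x}{\log\log x+\log\alpha}\right),
\]
and compare the exponent with that of $w_{\alpha L+\ep}(x)=\exp((\alpha L+\ep)\log x/\log\log x)$. Their difference is
\[
\frac{\alpha L\log x}{\log\log x+\log\alpha}-\frac{(\alpha L+\ep)\log x}{\log\log x}
=\frac{-\ep\log\log x-(\alpha L+\ep)\log\alpha}{(\log\log x+\log\alpha)\log\log x}\cdot\log x.
\]
For $x$ larger than some threshold $x_0=x_0(L,\alpha,\ep)$, the numerator is bounded above by $-\tfrac{\ep}{2}\log\log x$, so the whole quantity is negative and the desired inequality $w_L(x^\alpha)\le w_{\alpha L+\ep}(x)$ follows. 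For $x\le x_0$ both sides are bounded in terms of $L$, $\alpha$, and $\ep$, so the inequality extends to all $x\ge1$ (say) at the cost of an implicit constant.

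Part (c) is the only substantive step. Taking logs of $y>xw_L(x)$ gives
\[
\log y-\log x>\frac{L\log x}{\log\log x},\qquad\text{equivalently}\qquad \log x<\log y\cdot\frac{\log\log x}{L+\log\log x}.
\]
Since $x\le y$ forces $\log\log x\le\log\log y$ and $t/(L+t)$ is increasing in $t$, I obtain
\[
\log x<\log y\cdot\frac{\log\log y}{L+\log\log y}.
\]
The target inequality $x\ll_{L,\ep}y/w_{L-\ep}(y)$ is equivalent, after taking logs, to
\[
\log x\le \log y\cdot\frac{\log\log y-(L-\ep)}{\log\log y}+O_{L,\ep}(1),
\]
so it suffices to verify
\[
\frac{\log\log y}{L+\log\log y}\le\frac{\log\log y-(L-\ep)}{\log\log y},
\]
which after cross-multiplication reduces to the elementary inequality $\ep\log\log y\ge L(L-\ep)$. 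This holds whenever $\log\log y\ge L(L-\ep)/\ep$, i.e.\ for all $y$ above a threshold depending only on $L$ and $\ep$; for smaller $y$, the hypothesis $y>xw_L(x)\ge x$ forces $x$ to be bounded in terms of $L$ and $\ep$, which is again absorbed by the implicit constant. The main (mild) obstacle is just being careful that the direction of the inequalities survives the replacement of $\log\log x$ by $\log\log y$; this is why the argument uses monotonicity of $t/(L+t)$ rather than an asymptotic expansion.
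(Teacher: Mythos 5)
Parts (a) and (b) of your proposal track the paper's argument: (a) is immediate from the definition, and (b) is exactly the same exponent-comparison (the paper states it more compactly, observing $\alpha L/(\log\log x + \log\alpha) < (\alpha L + \ep)/\log\log x$ for $x$ large, which is the content of your sign analysis of the difference). For part (c), however, you take a genuinely different route. The paper's proof exploits the eventual monotonicity of $y \mapsto y/w_{L-\ep}(y)$: substituting the lower bound $y > x w_L(x)$, bounding $w_L(x) \le x^{\ep/L}$, and then invoking part (b) with $\alpha = 1 + \ep/L$, $\ep^2/L$ in place of $\ep$, and $L - \ep$ in place of $L$, to produce $w_{(L-\ep)(1+\ep/L)+\ep^2/L}(x) = w_L(x)$, which cancels. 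Your proof instead passes to logarithms directly, rewrites the hypothesis as $\log x < \log y \cdot \frac{\log\log x}{L + \log\log x}$, uses monotonicity of $t \mapsto t/(L+t)$ together with $x \le y$ to replace $\log\log x$ by $\log\log y$, and reduces the target inequality to the elementary condition $\ep \log\log y \ge L(L-\ep)$. Your approach is more self-contained (it does not feed back through part (b)) and is arguably more transparent, since the arithmetic $(L-\ep)(1+\ep/L)+\ep^2/L = L$ in the paper's version is a small piece of magic. The paper's version, on the other hand, demonstrates the reusability of (b) and avoids the cross-multiplication bookkeeping. Both arguments handle the small-$y$ regime by absorbing bounded ranges into the implied constant, and both implicitly assume $x,y$ large enough that the various logarithms behave; your explicit mention of this is welcome. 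One minor point worth making explicit: when $\ep \ge L$ the conclusion is trivial because $w_{L-\ep}(y) \le 1$ for $y$ large, so the interesting case is $\ep < L$, and then $L(L-\ep)/\ep > 0$ gives a genuine threshold on $y$.
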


\begin{proof}
Part (a) is obvious from the definition. Because $\alpha L/(\log\log x+\log\alpha) < (\alpha L+\ep)/\log\log x$ for $x$ large (even if $\log\alpha$ is negative), we have
\begin{align*}
w_L(x^\alpha) = \exp\bigg( \frac{L \log x^\alpha}{\log\log x^\alpha} \bigg) &= \exp\bigg( \frac{\alpha L \log x}{\log\log x + \log\alpha} \bigg) \\
&\ll_{L,\alpha,\ep} \exp\bigg( \frac{(\alpha L+\ep) \log x}{\log\log x} \bigg) = w_{\alpha L+\ep}(x),
\end{align*}
which establishes part (b). Finally, $y/w_{L-\ep}(y)$ is an eventually increasing function, so
\[
\frac y{w_{L-\ep}(y)} \gg \frac{xw_L(x)}{w_{L-\ep}(xw_L(x))} \gg_{L,\ep} \frac{xw_L(x)}{w_{L-\ep}(x^{1+\ep/L})},
\]
where the latter inequality holds because $w_L(x)$ grows more slowly than any power of~$x$. Applying part (b), with $\ep^2/L$ in place of $\ep$ and $L-\ep$ in place of $L$ and $1+\ep/L$ in place of $\alpha$, gives
\[
\frac y{w_{L-\ep}(y)} \gg_{L,\ep} \frac{xw_L(x)}{w_{(L-\ep)(1+\ep/L)+\ep^2/L}(x)} = x,
\]
which establishes part (c).
\end{proof}

Let $\chi\mod q$ be a fixed non-principal character. Let $0 \le \alpha < \beta$ be fixed real numbers, and define $s_k = \tfrac{1}{2} + 2 \pi i (\alpha + k \beta)$ as before. Recall that for $P_2(x)=1$, 
\[
  M_2(s) = M(s,\chi,P_2) = \sum_{m\le X} \frac{\chi(m)\mu(m)}{m^s};
\]
here $1\le X\le T$ will be specified later. As in the previous section, we note the trivial bound $M_2(\frac12+it) \ll X^{1/2}$. We examine the sum
\[
S_1(T,P_2) = \sum_{k=1}^T L(s_k,\chi) M_2(s_k),
\]
as showing that $S_1(T,P_2) \ne 0$ for a particular $T$ is sufficient to show that there exists a positive integer $k\le T$ for which $L(s_k,\chi)\ne0$. It turns out that the simpler mollifier $M_2$ yields a much better error term for the first moment than the more complicated mollifier $M_1$ from the previous section, as a comparison of the following proposition with Proposition~\ref {main thm first moment prop} will show.


\begin{prop}
\label{asymptotic for second first moment prop}
Suppose that $\beta \ge 1$ and $T\ge\max\{\beta,q\}$. For any constant $L>\log 2$,
\[
S_1(T,P_2) = T + O_L\big( w_L( \beta^{3/2}q^{1/2}T^{3/2} ) \beta^{3/4}q^{1/4}T^{3/4} \log \beta qT \big).
\]
\end{prop}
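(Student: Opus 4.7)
The plan is to parallel the proof of Proposition~\ref{main thm first moment prop}, exploiting the fact that the coefficients arising from $M_2$ exhibit cleaner cancellation than those from $M_1$.

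Set $U = 3(|\alpha| + |\beta|)T$ so that Proposition~\ref{afedirichlet} (applied with $\sigma = 1/2$, $x = U$) gives
\[
L(s_k,\chi) = \sum_{\ell \le qU} \chi(\ell)\ell^{-s_k} + O(T^{-1/2}\log q)
\]
uniformly for $1\le k\le T$. Multiplying by $M_2(s_k)$, expanding as a Dirichlet polynomial, and summing over $k$ using the geometric-series bound $\sum_{k=1}^T n^{-2\pi ik\beta} \ll \min\{T, \|\beta\log n\|^{-1}\}$ from~\eqref{eq:geometric sum}, the diagonal $n=1$ term contributes exactly $T$ and we are left with
\[
S_1(T,P_2) = T + O\bigg(\sum_{1<n\le qUX}\frac{|a_n|}{\sqrt n}\min\{T,\|\beta\log n\|^{-1}\} + T^{1/2}X^{1/2}\log q\bigg),
\]
where $a_n = \sum_{\ell m=n,\,\ell\le qU,\,m\le X}\mu(m)$.

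The key simplification afforded by $P_2 \equiv 1$ is this: whenever $1<n\le X$ and $X \le qU$, every divisor $m$ of $n$ automatically satisfies $m\le X$ and $n/m \le qU$, so $a_n = \sum_{m\mid n}\mu(m) = 0$. Hence only the range $X<n\le qUX$ contributes to the error, and on this range $|a_n|\le \tau(n) \ll_L w_L(n)$ for any $L>\log 2$, by the classical divisor bound $\tau(n)\ll_\ep \exp((\log 2 + \ep)(\log n)/\log\log n)$. Invoking the $\beta\ge 1$ case of Proposition~\ref{one fell swoop prop} then yields
\[
\sum_{X<n\le qUX}\frac{|a_n|}{\sqrt n}\min\{T,\|\beta\log n\|^{-1}\} \ll w_L(qUX)\bigl(\beta T X^{-1/2} + (qUX)^{1/2}\log(qUX)\bigr).
\]

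To conclude, set $X = (\beta T/q)^{1/2}$: a short calculation shows $qUX \asymp \beta^{3/2}q^{1/2}T^{3/2}$, both $\beta T X^{-1/2}$ and $(qUX)^{1/2}$ have order of magnitude $\beta^{3/4}q^{1/4}T^{3/4}$, the residual error $T^{1/2}X^{1/2}$ is of strictly smaller order, and the condition $X\le qU$ holds amply under the standing hypotheses $\beta\ge 1$ and $T\ge \max\{\beta,q\}$. Passing from $w_L(qUX)$ to $w_{L'}(\beta^{3/2}q^{1/2}T^{3/2})$ via parts~(a)--(c) of Lemma~\ref{w simplify lemma} (with $L' > L$ slightly larger, which is harmless because the exponent $L$ in the statement is arbitrary subject to $L > \log 2$), and absorbing all remaining logarithmic factors into $\log(\beta qT)$, yields the claimed error term. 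The main obstacle is purely bookkeeping: tracking all dependences on $\beta$, $q$, $T$ cleanly enough to land on precisely the form $w_L(\beta^{3/2}q^{1/2}T^{3/2})\beta^{3/4}q^{1/4}T^{3/4}\log(\beta qT)$; the crucial analytic input, namely the vanishing of $a_n$ for $n\le X$, comes for free from the shape of $P_2$.
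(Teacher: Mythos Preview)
Your proposal is correct and follows essentially the same route as the paper's proof: apply Proposition~\ref{afedirichlet}, multiply by the mollifier, sum the resulting geometric series, exploit the vanishing of the coefficients $a_n$ for $1<n\le X$, bound the surviving coefficients by $w_L(n)$, invoke Proposition~\ref{one fell swoop prop}, and balance by setting $X=(\beta T/q)^{1/2}$. The only cosmetic differences are that the paper takes $U=4\beta T$ (rather than your $U=3(|\alpha|+|\beta|)T$), uses the slightly sharper $|b_n|\le 2^{\omega(n)}$ in place of $\tau(n)$, and tracks the approximate-functional-equation error as $(\beta T)^{-1/2}\log q$ rather than $T^{-1/2}\log q$; none of these affects the outcome, and your appeal to Lemma~\ref{w simplify lemma} at the end is harmless (indeed unnecessary, since with the paper's choice of $U$ one has $\beta qTX=\beta^{3/2}q^{1/2}T^{3/2}$ exactly).
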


\begin{proof}
Set $U = 4\beta T$, and notice that $U \ge 2 \beta(T+1) \ge 2 (\alpha + k \beta) = 2|\mathop{\rm Im} s_k|/2\pi$ for any $1\le k\le T$. Therefore Proposition~\ref {afedirichlet} implies that
\[
   L(s_k,\chi) = \sum_{\ell\le qU} \frac{\chi(\ell)}{\ell^{s_k}} + O\big( (\beta T)^{-1/2}\log q \big).
\]
Using the trivial bound for $M_2(s_k)$, we see that
\begin{align*}
L(s_k,\chi) M_2(s_k) &= \sum_{\ell\le qU} \frac{\chi(\ell)}{\ell^{s_k}} \sum_{m\le X} \frac{\chi(m)\mu(m)}{m^s} + O\big( |M_2(s_k)| (\beta T)^{-1/2}\log q \big) \\
&= \sum_{n\le qUX} \frac{\chi(n)b_n}{n^{s_k}}  + O\big( X^{1/2} (\beta T)^{-1/2}\log q \big),
\end{align*}
where the coefficients $b_n$ are defined by
\begin{equation*}
  b_n = \sum_{\substack{\ell m=n \\ \ell\le qU \\ m\le X}} \mu(m).
\end{equation*}

Since $b_1=1$, we have
\begin{align*}
  S_1(T,P_2) &= \sum_{k=1}^T \bigg( \sum_{n\le qUX} \frac{\chi(n) b_n}{n^{s_k}} + O\big( X^{1/2} (\beta T)^{-1/2}\log q \big) \bigg) \\
  &= \sum_{n\le qUX} \frac{\chi(n)b_n}{n^{1/2+2\pi i\alpha}} \sum_{k=1}^T n^{-2\pi i k\beta} + O\big( \beta^{-1/2}T^{1/2}X^{1/2}\log q \big) \\
  &= T + O\bigg( \sum_{1<n\le qUX} \frac{|b_n|}{n^{1/2}} \min\{ T, \| \beta\log n \|^{-1} \} + \beta^{-1/2}T^{1/2}X^{1/2}\log q \bigg),
\end{align*}
by the geometric series estimate~\eqref {eq:geometric sum}. Just as in the proof of Lemma~\ref {an lemma}, it is easy to show that $|b_n| \le 2^{\omega(n)}$ for any $n$. Moreover, if $1<n\le X$ then the sum defining $b_n$ includes all of the factorizations of $n$, and so $b_n=0$ for these values of~$n$. We conclude that
\begin{equation}
\label{reduced to min sum}
S_1(T,P_2) = T + O_L\bigg( \sum_{X<n\le qUX} \frac{2^{\omega(n)}} {\sqrt n} \min\{ T, \| \beta\log n \|^{-1} \} + \beta^{-1/2}T^{1/2} X^{1/2} \log q \bigg).
\end{equation}
From the inequality $\omega(n) < (1+o(1)) (\log n)/\log\log n$~\cite[Theorem 2.10]{MV}, we see that $2^{\omega(n)} = \exp\big( \omega(n)\log 2 \big) \ll_L w_L(n)$, where $w_L$ was defined in equation~\eqref {w def}. Therefore
\begin{equation*}
S_1(T,P_2) = T + O_L\bigg( w_L(qUX) \sum_{X<n\le qUX} \frac{\min\{ T, \| \beta\log n \|^{-1} \}} {\sqrt n} + \beta^{-1/2}T^{1/2} X^{1/2} \log q \bigg).
\end{equation*}
By Proposition~\ref{one fell swoop prop} (recalling that we are assuming $\beta\ge1$),
\begin{align*}
S_1(T,P_2) &= T + O_L\big( w_L(qUX) \big( \beta TX^{-1/2} + (qUX)^{1/2} \log qUX \big) + \beta^{-1/2}T^{1/2} X^{1/2} \log q \big) \\
&= T + O_L\big( w_L(\beta qTX) ( \beta TX^{-1/2} + (\beta qTX)^{1/2} \log \beta qTX) \big).
\end{align*}
We now set $X=\sqrt{\beta T/q}$, which is in the required range $1\le X\le T$ thanks to the hypothesis $T\ge\max\{\beta,q\}$. We then obtain
\begin{equation*}
S_1(T,P_2) = T + O_L\big( w_L( \beta^{3/2}q^{1/2}T^{3/2} ) \beta^{3/4}q^{1/4}T^{3/4} \log \beta qT \big)
\end{equation*}
as claimed.
\end{proof}

\begin{proof}[Proof of Theorem \ref{lowest nonzero theorem}]
We begin under the assumption $b\ge2\pi$, so that $\beta = b/{2\pi} \ge 1$. Define $L = \frac12(\frac D8+\log 2)$ and $\ep = \frac12(\frac D8-\log 2)$, so that $L>\log 2$ and $\ep>0$. Suppose that
\[
T > \beta^3q \cdot w_D(\beta^3q) = \beta^3q \cdot w_{8(L+\ep)}(\beta^3q).
\]
It follows from Lemma~\ref {w simplify lemma}(c) that $\beta^3q < T/w_{8L+7\ep}(T)$. Therefore Proposition~\ref {asymptotic for second first moment prop} implies
\begin{align*}
S_1(T,P_2) &= T + O_L\bigg( w_L( T^2 ) \bigg( \frac T{w_{8L+7\ep}(T)} \bigg)^{1/4} T^{3/4} \log T^2 \bigg) \\
&= T + O_{L,\ep}\bigg( T \frac{w_{2L+\ep}(T)\log T}{w_{2L+7\ep/4}(T)} \bigg) = T + O_{L,\ep}\bigg( \frac{T \log T}{w_{3\ep/4}(T)} \bigg)
\end{align*}
by Lemma~\ref {w simplify lemma}(b) and (a). Since the error term is $o(T)$, the first moment $S_1(T,P_2)$ is nonzero when $T$ is large enough in terms of $L$ and $\ep$. In other words, $S_1(T,P_2)\ne0$ when $T \gg_D \beta^3q \cdot w_D(\beta^3q)$, which implies Theorem~\ref {lowest nonzero theorem}(a).

Now suppose that $0<b<2\pi$, so that $0<\beta<1$. Note that Theorem~\ref {lowest nonzero theorem}(a) implies that if $2\pi\le c \le 4\pi$, then there exists a positive integer $j \ll_D q \exp\big( D (\log q)/\log\log q \big)$ such that $L(\frac12+i(a+cj),\chi)\ne0$, where the implicit constant is independent of~$c$. Applying this remark with $c=\lceil 2\pi/b \rceil b$ which is between $2\pi$ and $4\pi$, we see that there exists a positive integer $j \ll_D q \exp\big( D (\log q)/\log\log q \big)$ such that $L(\frac12+i(a+b\lceil 2\pi/b \rceil j),\chi)\ne0$. In other words, there exists a positive integer $k \ll_D \lceil 2\pi/b \rceil q \exp\big( D (\log q)/\log\log q \big)$ such that $L(s_k,\chi)\ne0$, which establishes Theorem~\ref {lowest nonzero theorem}(b).
\end{proof}

\noindent {\bf Acknowledgements}
The authors thank the Banff International Research Station for their hospitality in May 2009 for a Research in Teams meeting. We greatly appreciated the excellent working conditions and the beautiful scenery of the Canadian Rockies. We also thank Maksym Radziwill for several interesting conversations and for informing us of the relevance of~\cite{FSZ} to our work.

\end{document}